\newtheorem{theorem}{Theorem}[section]
\newtheorem{proposition}[theorem]{Proposition}
\newtheorem{lemma}[theorem]{Lemma}
\newtheorem{corollary}[theorem]{Corollary}
\theoremstyle{definition}
\newtheorem{definition}[theorem]{Definition}
\begin{document}
\title{ Spectrum of Weighted Composition Operators \\
Part VIII \\
Lower semi-Fredholm spectrum of weighted composition operators
on $C(K)$. The case of non-invertible surjections.}

\author{Arkady Kitover}

\address{Community College of Philadelphia, 1700 Spring Garden St., Philadelphia, PA, USA}

\email{akitover@ccp.edu}

\author{Mehmet Orhon}

\address{University of New Hampshire, 105 Main Street
Durham, NH 03824}

\email{mo@unh.edu}

\subjclass[2010]{Primary 47B33; Secondary 47B48, 46B60}

\date{\today}

\keywords{Weighted composition operators, spectrum, Fredholm spectrum, essential spectra}

\maketitle

\markboth{Arkady Kitover and Mehmet Orhon}{Spectrum of weighted composition operators. VIII}

\section{Introduction} In this paper we continue the study of essential spectra of weighted composition operators. In~\cite{Ki3} the first named author obtained a description of essential spectra of weighted composition operators on $C(K)$ in the case when the corresponding map is a homeomorphism of the compact space $K$ onto itself. That allowed to provide a similar description of essential spectra of operators of the form $wU$ acting on Banach lattices in the case when $w$ is a central operator, $U$ is a $d$-isomorphism, and the spectrum of $U$ lies on the unit circle.

In~\cite{KO2} the authors described essential spectra of weighted composition operators on $C(K)$ in the case when the corresponding map is a non-invertible homeomorphism of $K$ into itself.

In~\cite{KO1} we obtained some general results concerning essential spectra of $d$-homomorphisms of Banach $C(K)$-modules. From this results follows as a very special case that the upper semi-Fredholm spectrum of a weighted composition operator on $C(K)$ is rotation invariant provided that the set of eventually periodic points of the corresponding map is of first category in $K$.

Despite these partial results, the problem of describing essential spectra of general weighted composition operators on $C(K)$ and other Banach lattices remains, to the best of our knowledge, unsolved. The current paper represents another step toward the solution of this problem: namely, we describe the lower semi-Fredholm and the Fredholm spectrum of a weighted composition operator on $C(K)$ in the case when the corresponding map is a surjection.

The paper is organized as follows.

\noindent In section 2 we introduce the notations we use throughout the paper and state some known results needed in the sequel.

\noindent In section 3 we obtain a criterion for the operator $\lambda I - T$ to be Fredholm or lower semi-Fredholm, providing that $|\lambda| < \rho_{min}(T)$. We refer the reader to formula~(\ref{eq12}) below for the definition of $\rho_{min}(T)$.

\noindent In section 4 we obtain a criterion for the operator $\lambda I - T$ to be Fredholm or lower semi-Fredholm, providing that $\rho_{min}(T) < |\lambda| < \rho(T)$. To obtain the corresponding results, and at the same time to avoid making our statements too cumbersome, we had to impose  the following additional conditions.
\begin{enumerate}
  \item The surjection $\varphi$ is open.
  \item The weight $w$ is an invertible element of the algebra $C(K)$.
  \item The set of all eventually $\varphi$-periodic points is of first category in $K$.
\end{enumerate}

It might be worth noticing that the criterions we obtained involve the notion of \textbf{almost homeomorphisms} of compact Hausdorff spaces that was introduced and studied by Louis Friedler and the first named author in~\cite{FK}. In particular, Theorem~\ref{t8} states that if the compact Hausdorff space belongs to the class AH (see Definition~\ref{d5}) then the spectrum $\sigma(T)$ coincides with the Fredholm spectrum $\sigma_f(T)$.

\section{Notations and preliminaries}

Throughout the paper the following notations are used.

\noindent $K$ is a compact Hausdorff space.

\noindent $C(K)$ is the Banach algebra of all continuous complex-valued functions on $K$ endowed with the supremum norm.

\noindent $\varphi$ is a continuous map of $K$ onto itself.

\noindent $w \in C(K)$ is the weight.

\noindent $\mathds{N}$ is the semigroup of all positive integers.

\noindent $\mathds{Z}$ is the group of all integers.

\noindent $\mathds{R}$ is the field of all real numbers.

\noindent $\mathds{C}$ is the field of all complex numbers.

\noindent $\mathds{T}$ is the unit circle.

\noindent $\mathds{D}$ is the open unit disk.

\noindent $\mathds{U}$ is the closed unit disk.

\noindent For any $n \in \mathds{N}$ we denote by $\varphi^n$ the $n^{th}$ iteration of the map $\varphi$.

\noindent $\varphi^0$ is the identical map: $\varphi^0(k) = k, k \in K$.

\noindent If the map $\varphi$ is invertible and $n \in \mathds{N}$ then $\varphi^{-n}$ is the $n^{th}$ iteration of the inverse map $\varphi^{-1}$.

\noindent For any subset $F$ of $K$ and for any $n \in \mathds{N}$ we denote by $\varphi^{(-n)}(F)$ the full $n^{th}$ preimage of $F$, i.e.
\begin{equation*}
\varphi^{(-n)}(F) = \{k \in K : \; \varphi^n(k) \in F\}
\end{equation*}

\noindent For any $n \geq 1$ we denote by $w_n$ the function $w(w \circ \varphi) \ldots (w \circ \varphi^{n-1})$. Thus, $w_1 = w$.

A point $k \in K$ is called eventually $\varphi$-periodic if for some $n \geq 0$ the point $\varphi^n(k)$ is $\varphi$-periodic. When it does not cause any ambiguity we will write eventually periodic or periodic instead of eventually $\varphi$-periodic or $\varphi$-periodic, respectively.

\noindent For a given $w \in C(K)$ and a continuous surjection $\varphi$ we define the weighted composition operator
$T = wT_\varphi$ as
\begin{equation*}
  (Tf)(k) = w(k)f(\varphi(k)), \; f \in C(K), \; k \in K.
\end{equation*}
Let $X$ be a Banach space. We denote its Banach conjugate by $X^\prime$.

\noindent We denote the algebra of all bounded linear operators on $X$ by $L(X)$.

\noindent Let $T \in L(X)$. We denote by $\sigma(T,X)$ and $\rho(T,X)$  the spectrum and the spectral radius of $T$, respectively.

\noindent We will use the notations $\sigma(T)$ and $\rho(T)$, as far as it does not cause any ambiguity.

\noindent In the case when $T = wT_\varphi$ is a weighted composition operator on $C(K)$ the spectral radius $\rho(T)$ can be computed as (see~\cite{Ki1})
\begin{equation}\label{eq11}
  \rho(T) = \max \limits_{\mu \in M_\varphi} \exp \int \ln |w| d\mu,
\end{equation}
where $M_\varphi$ is the set of all regular Borel $\varphi$-invariant probability measures on $K$.

\noindent In the case when $|w| >0$ on $K$ we also introduce
\begin{equation}\label{eq12}
  \rho_{min}(T) = \min \limits_{\mu \in M_\varphi} \exp \int \ln |w| d\mu.
\end{equation}

\noindent Notice that in the case when the operator $T = wT_\varphi$ is invertible on $C(K)$ we have
$\rho_{min}(T) = [\rho(T^{-1})]^{-1}$.

\noindent Recall that $T \in L(X)$ is called lower semi-Fredholm if its defect is finite, i.e. $codim(TX) < \infty$, it is called upper semi-Fredholm if its range is closed in $X$ and $dim \ker{T} < \infty$, and it is called Fredholm if it is both lower semi-Fredholm and upper semi-Fredholm

\noindent We use the standard notations $\Phi^-(X)$, $\Phi^+(X)$, and $\Phi(X)$  for the sets of all lower semi-Fredholm operators, upper semi=Fredholm operators, and Fredholm operators in $L(X)$, respectively. We will write $\Phi^-, \Phi^+$, and $\Phi$ instead of $\Phi^-(X), \Phi^+(X)$, and $\Phi(X)$ when it will not cause any ambiguity.

\noindent We consider the following subsets of $\sigma(T)$.

\noindent $\sigma_{a.p.}(T) = \{\lambda \in \mathds{C}: \exists x_n \in X, \|x_n\| =1, Tx_n -\lambda x_n \rightarrow 0$\}. Thus, $\sigma_{a.p.}(T)$ is the union of the point spectrum and the approximate point spectrum of $T$.

\noindent $\sigma_r(T) = \sigma(T) \setminus \sigma_{a.p.}(T)$; i.e., $\lambda \in \sigma_r(T)$ if and only if the operator $\lambda I - T$ is not invertible but bounded from below.

\noindent $\sigma_{usf}(T)$ is the upper semi-Fredholm spectrum of an operator $T \in L(X)$. It is defined as
\begin{equation*}
  \sigma_{usf}(T) = \{\lambda \in \mathds{C} : \; \lambda I - T \not \in \Phi^+ \}.
  \end{equation*}

\noindent $\sigma_{lsf}(T)$ is the lower semi-Fredholm spectrum of an operator $T \in L(X)$. It is defined as
\begin{equation*}
  \sigma_{lsf}(T) = \{\lambda \in \mathds{C} : \; \lambda I - T \not \in \Phi^- \}.
  \end{equation*}

  \noindent $\sigma_f(T)$ is the Fredholm spectrum of an operator $T \in L(X)$. It is defined as
\begin{equation*}
  \sigma_f(T) = \{\lambda \in \mathds{C} : \; \lambda I - T \not \in \Phi \}.
  \end{equation*}

  \noindent We will need the following well known characterizations of $\sigma_{usf}(T)$ and $\sigma_{lsf}(T)$ (see e.g.~\cite{EE}).

  \begin{proposition} \label{p1} Let $X$ be a Banach space and $T \in L(X)$. Let $\lambda \in \mathds{C}$.
  The following conditions are equivalent.
  \begin{enumerate}
    \item $\lambda \in \sigma_{usf}(T)$
    \item There is a sequence $x_n$, $x_n \in X$, $n \in \mathds{N}$, such that $\|x_n\| = 1$,
    $Tx_n - \lambda x_n \rightarrow 0$, and the sequence $x_n$ is singular, i.e. it does not contain any norm convergent subsequence.
  \end{enumerate}
  \end{proposition}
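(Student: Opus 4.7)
The proposition is a classical characterization, and I would prove both implications directly, taking the contrapositive for $(2) \Rightarrow (1)$.

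For $(2) \Rightarrow (1)$, assume $\lambda I - T \in \Phi^+$ and set $N := \ker(\lambda I - T)$. Since $N$ is finite-dimensional it admits a closed complement $M$, and since the range of $\lambda I - T$ is closed its injective restriction $(\lambda I - T)|_M$ is bounded below with some constant $c > 0$. Given any sequence $x_n \in X$ with $\|x_n\|=1$ and $(\lambda I - T)x_n \to 0$, decompose $x_n = y_n + z_n$ with $y_n \in N$ and $z_n \in M$. Then $(\lambda I - T)z_n = (\lambda I - T)x_n \to 0$, so $\|z_n\| \leq c^{-1}\|(\lambda I - T)z_n\| \to 0$. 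The sequence $y_n = x_n - z_n$ is bounded in the finite-dimensional space $N$ and therefore admits a norm-convergent subsequence; hence so does $x_n$, ruling out the existence of a singular approximate eigensequence.

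For $(1) \Rightarrow (2)$, I would split into two cases according to which clause in the definition of $\Phi^+$ fails. If $N$ is infinite-dimensional, Riesz's lemma inside $N$ yields unit vectors $x_n \in N$ with $\|x_n - x_m\| \geq 1/2$ for $n \neq m$; this sequence is singular and automatically satisfies $(\lambda I - T)x_n = 0$. Otherwise $N$ is finite-dimensional but the range of $\lambda I - T$ is not closed. Choosing a closed complement $M$ of $N$, the restriction $(\lambda I - T)|_M$ is injective, and if it were bounded below its image, which coincides with the full range of $\lambda I - T$, would be closed, a contradiction. Hence there exist unit vectors $y_n \in M$ with $(\lambda I - T)y_n \to 0$. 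Any norm-convergent subsequence $y_{n_k} \to y$ would lie in the closed subspace $M$ and satisfy $(\lambda I - T)y = 0$, forcing $y \in N \cap M = \{0\}$ and contradicting $\|y\|=1$; thus $\{y_n\}$ is singular.

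The argument is essentially routine once the complemented-kernel decomposition is in hand; the only mildly delicate point is that in the range-not-closed case the approximate eigensequence produced from the failure of the lower bound is automatically singular, which follows from the trivial intersection $N \cap M = \{0\}$ rather than from any extra construction.
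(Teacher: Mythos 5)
Your argument is correct and complete: the contrapositive of $(2)\Rightarrow(1)$ via the decomposition $X=N\oplus M$ with $(\lambda I-T)|_M$ bounded below, and the two-case analysis for $(1)\Rightarrow(2)$ (Riesz's lemma inside an infinite-dimensional kernel, or a non-bounded-below restriction to a closed complement whose approximate eigensequence is automatically singular because $N\cap M=\{0\}$) are exactly the standard route. Note that the paper itself gives no proof of this proposition; it is stated as a well-known characterization with a reference to Edmunds--Evans, so there is no internal argument to compare against---your write-up simply supplies the standard proof that the cited source contains, and it does so without gaps.
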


  \begin{proposition} \label{p2} Let $X$ be a Banach space and $T \in L(X)$. Let $\lambda \in \mathds{C}$.
  The following conditions are equivalent.
  \begin{enumerate}
    \item $\lambda \in \sigma_{lsf}(T)$
    \item There is a sequence $x_n'$, $x_n' \in X^\prime$, $n \in \mathds{N}$, such that $\|x_n'\| = 1$,
    $T'x_n' - \lambda x_n' \rightarrow 0$, and the sequence $x_n'$ is singular, i.e. it does not contain any norm convergent subsequence.
  \end{enumerate}
  \end{proposition}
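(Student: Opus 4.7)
The plan is to reduce the statement to Proposition~\ref{p1} applied to the Banach conjugate $T'$ acting on $X'$. The key ingredient is the standard duality between lower and upper semi-Fredholm operators, namely
\begin{equation*}
S \in \Phi^-(X) \iff S' \in \Phi^+(X'), \qquad S \in L(X).
\end{equation*}
Granting this, with $S = \lambda I - T$ and $S' = \lambda I - T'$, one gets the equivalence $\lambda \in \sigma_{lsf}(T) \iff \lambda \in \sigma_{usf}(T')$, and Proposition~\ref{p1} applied to $T'$ produces exactly the singular sequence in $X'$ described in condition (2).

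First I would establish the duality. One direction: if $S \in \Phi^-(X)$, then $\mathrm{codim}\,S(X) < \infty$, so in particular $S(X)$ is closed; by the closed range theorem $S'(X')$ is closed, and moreover $\ker S'$ is canonically isomorphic to the annihilator of $S(X)$, whose dimension equals $\mathrm{codim}\,S(X) < \infty$. Hence $S' \in \Phi^+(X')$. For the converse, if $S' \in \Phi^+(X')$, the range $S'(X')$ is closed; the closed range theorem again gives that $S(X)$ is closed; and the annihilator of $S(X)$ coincides with $\ker S'$, giving $\mathrm{codim}\,S(X) = \dim \ker S' < \infty$. Thus $S \in \Phi^-(X)$.

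Having the duality in hand, the proof concludes in one line: Proposition~\ref{p1} applied to $T' \in L(X')$ at the point $\lambda$ states that $\lambda \in \sigma_{usf}(T')$ iff there exists a singular sequence $x_n' \in X'$ with $\|x_n'\| = 1$ and $T'x_n' - \lambda x_n' \to 0$. Combining with $\sigma_{lsf}(T) = \sigma_{usf}(T')$ gives Proposition~\ref{p2}.

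The only non-routine step is the duality $S \in \Phi^-(X) \iff S' \in \Phi^+(X')$; but this is classical and proceeds just as sketched from the closed range theorem together with the annihilator identification $\ker S' \cong (S(X))^\perp$. No further complication is needed since, unlike the upper semi-Fredholm case, one does not have to argue separately that the range is closed: finite codimension of $S(X)$ in $X$ (for the forward direction) or closedness of $S'(X')$ (for the reverse) is already built into the hypotheses.
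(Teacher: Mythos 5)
Your argument is correct. Note that the paper does not prove Proposition~\ref{p2} at all: both Propositions~\ref{p1} and~\ref{p2} are quoted as known facts with a reference to~\cite{EE}, so there is no in-paper proof to compare against. Your reduction is the standard one that underlies the cited result: the duality $S \in \Phi^-(X) \iff S' \in \Phi^+(X')$, applied to $S = \lambda I - T$, gives $\sigma_{lsf}(T) = \sigma_{usf}(T')$, and then Proposition~\ref{p1} applied to $T' \in L(X')$ yields exactly the singular-sequence characterization in condition (2). The duality itself is established correctly via the closed range theorem and the identification $\ker S' = (S(X))^{\perp}$.

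One small point of precision: with the paper's definition of lower semi-Fredholm (only $\mathrm{codim}\, S(X) < \infty$ is required), the closedness of $S(X)$ in the forward direction is not literally ``built into the hypotheses,'' as your last paragraph suggests; it rests on the standard lemma that the range of a bounded operator with finite algebraic codimension is automatically closed (e.g., the map $(X/\ker S) \oplus F \to X$, $([x],f) \mapsto Sx+f$, with $F$ a finite-dimensional algebraic complement, is a bounded bijection, hence an isomorphism by the open mapping theorem). You do invoke this fact correctly (``so in particular $S(X)$ is closed''), so this is a matter of attribution rather than a gap; spelling it out in one line would make the sketch complete.
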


  \begin{definition}
    A subset $\{k_n, n \in \mathds{Z}\}$ of $K$ is called a $\varphi$-string if $\varphi(k_n) = k_{n+1}, n \in \mathds{Z}$.
  \end{definition}
  Consider the space $\prod \limits_{n= - \infty}^\infty K_n$ endowed with the Tikhonov topology, where each $K_n$ is a copy of $K$. The set of all $\varphi$-strings is a closed subset of $\prod \limits_{n= - \infty}^\infty K_n$. We will denote the compact space of all $\varphi$-strings by $\mathbf{K}$.

  \noindent We define the homeomorphism $\Phi$ of $\mathbf{K}$ onto itself as follows. If $\mathbf{k} \in \mathbf{K}$ and $\mathbf{k} = (k_n)_{n=-\infty}^\infty$ then $\Phi(\mathbf{k}) = (k_{n+1})_{n=-\infty}^\infty$.

  \noindent We define $\mathbf{w} \in C(\mathbf{K})$ by the equality
  \begin{equation*}
    \mathbf{w}(\mathbf{k}) =w(k_0).
  \end{equation*}

  \begin{definition} \label{d4} Let $K$ be a compact Hausdorff space, $\varphi : K \rightarrow K$ be a continuous surjection, and $w \in C(K)$. Let $\mathbf{K}$, $\Phi$, and $\mathbf{w}$ be the objects introduced above. We will say that $\Phi$ is the homeomorphism \textbf{associated} with $\varphi$ and that $\mathbf{T} = \mathbf{w}T_\Phi$ is the operator \textbf{associated} with $T = wT_\varphi$.

  \end{definition}

  We will need the following two facts about the lower semi-Fredholm spectrum of weighted automorphisms of $C(K)$ (see~\cite{Ki3}).

  \begin{theorem} \label{t2} Let $\varphi$ be a homeomorphism of $K$ onto itself, $w \in C(K)$, and $T =wT_\varphi$. Assume that the set of all $\varphi$-periodic points is of first category in $K$. Let $\lambda \in \mathds{C} \setminus \{0\}$. The following conditions are equivalent
  \begin{enumerate}
    \item $def(\lambda I - T) \neq 0$.
    \item $\exists k \in K$ such that
    \begin{equation}\label{eq7}
      |w_n(k)| \leq |\lambda|^n \; \text{and} \; |w_n(\varphi^{-n}(k)| \geq |\lambda|^n, \; n \in \mathds{N}.
    \end{equation}
  \end{enumerate}
     \end{theorem}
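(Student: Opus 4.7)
My plan is to reformulate the condition $def(\lambda I - T) \neq 0$ as an adjoint eigenvalue problem and then establish the two implications separately. By the Hahn--Banach theorem, this defect condition is equivalent to the existence of a nonzero measure $\mu \in M(K) = C(K)^\prime$ with $T^\prime \mu = \lambda \mu$. A short test-function computation identifies $T^\prime \mu = \varphi_\ast(w\mu)$; since $\varphi$ is a homeomorphism, this equation can be rewritten as $w\mu = \lambda (\varphi^{-1})_\ast \mu$. Passing to the total variation $\nu = |\mu|$, and using the fact that pushforwards by a homeomorphism preserve total variation, I obtain the positive identity $\varphi_\ast(|w|\nu) = |\lambda|\nu$, which iterates to
\[
\varphi^n_\ast(|w_n|\nu) = |\lambda|^n \nu, \qquad n \geq 1,
\]
and, evaluated on $K$, yields $\int |w_n|\,d\nu = |\lambda|^n \|\nu\|$ for every $n$.

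For the implication (1)$\Rightarrow$(2) my plan is to extract the required point $k$ from $\mathrm{supp}(\nu)$. Consider the closed sets
\[
A_n = \{k \in K : |w_n(k)| \leq |\lambda|^n\}, \qquad B_n = \{k \in K : |w_n(\varphi^{-n}(k))| \geq |\lambda|^n\}.
\]
The mean identity $\int |w_n|\,d\nu = |\lambda|^n \|\nu\|$ forces $\nu(A_n) > 0$, for otherwise $|w_n| > |\lambda|^n$ $\nu$-a.e.\ and the integral would be strictly larger. A parallel argument, noting that $B_n = \varphi^n(C_n)$ where $C_n = \{|w_n| \geq |\lambda|^n\}$ and using the iterated identity as a change of variables, yields $\nu(B_n) > 0$. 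The nontrivial step is to upgrade these individual positive-measure statements to $\bigcap_n (A_n \cap B_n) \cap \mathrm{supp}(\nu) \neq \varnothing$; I would accomplish this via a maximal-type inequality applied to the partial sums $S_n(k) = \sum_{i=0}^{n-1}\bigl(\log|w(\varphi^i(k))| - \log|\lambda|\bigr)$, combined with the first-category hypothesis, which precludes concentration of $\nu$ on thin periodic sets and supplies a non-eventually-periodic accumulation point in the required intersection.

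For (2)$\Rightarrow$(1) the plan is constructive. Given $k$ satisfying (2) with a non-periodic orbit (the periodic case yields an eigenmeasure supported on the periodic orbit directly), set $k_n = \varphi^n(k)$ and define
\[
c_0 = 1, \qquad c_n = \frac{w_n(k)}{\lambda^n}\ \ (n \geq 1), \qquad c_{-n} = \frac{\lambda^n}{w_n(\varphi^{-n}(k))}\ \ (n \geq 1).
\]
The recurrence $c_{m-1}\,w(k_{m-1}) = \lambda c_m$ holds, and by condition (2) we have $|c_n| \leq 1$ for every $n \in \mathds{Z}$. If the sequence $(c_n)$ is absolutely summable, then $\mu = \sum_n c_n \delta_{k_n}$ converges in $M(K)$ and is a nonzero adjoint eigenmeasure directly. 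Otherwise, I would form the truncations $\mu_N = \sum_{|n| \leq N} c_n \delta_{k_n}$, for which
\[
(T^\prime - \lambda I)\mu_N = c_N w(k_N)\,\delta_{k_{N+1}} - \lambda c_{-N}\,\delta_{k_{-N}},
\]
a measure of uniformly bounded total variation. Normalizing $\widetilde\mu_N = \mu_N/\|\mu_N\|$ and extracting a weak-$\ast$ cluster point $\mu \in M(K)$ then gives $T^\prime \mu = \lambda \mu$.

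The main obstacle will be proving that in this non-summable case the cluster point $\mu$ is nonzero. Because the coefficients $c_n$ are complex, phase cancellations could in principle collapse the weak-$\ast$ limit to zero even while the total-variation measures $|\widetilde\mu_N|$ converge to a genuine probability measure on $K$. The first-category hypothesis is used precisely to preclude this degeneracy: it guarantees that the orbit $\{k_n\}$ accumulates on a sufficiently rich subset of $K$ so that, after passing to a subsequence and possibly modulating by a unimodular phase, enough mass survives in the weak-$\ast$ limit. This is the technical heart of the argument and proceeds along the lines developed in~\cite{Ki3}.
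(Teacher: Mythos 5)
You should note first that the paper does not prove Theorem~\ref{t2} at all: it is quoted from~\cite{Ki3} as a known fact, so there is no in-paper proof to compare with, and your argument has to stand on its own. As it stands it has genuine gaps at exactly the decisive points. The opening reduction is not valid as stated: by Hahn--Banach, a nonzero $\mu$ with $T^\prime\mu=\lambda\mu$ exists if and only if the range of $\lambda I-T$ is not \emph{dense}, whereas $\mathrm{def}(\lambda I-T)\neq 0$ only says the range is a proper subspace, which a priori could be dense and non-closed, in which case there is no eigenmeasure. So $(1)\Rightarrow(2)$ cannot simply start from an eigenmeasure; you must either rule out a dense proper range for these operators or argue contrapositively (failure of (2) forces surjectivity, in the spirit of Theorem~\ref{t3}). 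Moreover, even granting the identity $\varphi_\ast(|w|\nu)=|\lambda|\nu$, the step you yourself flag as ``nontrivial'' --- producing one point $k$ with $|w_n(k)|\le|\lambda|^n$ and $|w_n(\varphi^{-n}(k))|\ge|\lambda|^n$ simultaneously for \emph{all} $n$ --- is the entire content of the implication, and ``a maximal-type inequality combined with the first-category hypothesis'' is not an argument: $\nu(A_n)>0$ and $\nu(B_n)>0$ for each $n$ separately gives no control over $\bigcap_n(A_n\cap B_n)$, and you do not say which maximal inequality for the sums $S_n$ would yield a point where the forward sums are $\le 0$ and the backward sums are $\ge 0$ for every $n$.

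In $(2)\Rightarrow(1)$ the summable case is correct (the recurrence $c_{m-1}w(k_{m-1})=\lambda c_m$ and the bound $|c_n|\le 1$ check out), but the non-summable case is where the real difficulty lives and your argument stops at the obstacle you acknowledge: a weak-$\ast$ cluster point of $\mu_N/\|\mu_N\|$ can perfectly well be zero because of cancellation between atoms carrying different phases, and neither ``modulating by a unimodular phase'' (a single global phase factor does not affect internal cancellation) nor the first-category hypothesis is shown to prevent this; since the orbit may be dense in $K$, you also cannot prescribe phases along it with a continuous test function, so there is no evident way to detect surviving mass in the limit. Your parenthetical treatment of the periodic case is also too quick: if $k$ is periodic of period $p$, condition (2) only gives $|w_p(k)|=|\lambda|^p$, and a $\lambda$-eigenmeasure supported on the orbit exists only when $w_p(k)=\lambda^p$; when the arguments differ the finite-orbit construction fails and some substitute is needed. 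In short, both implications are missing their key steps; a workable route (and, as far as one can tell, the route of~\cite{Ki3}) is to prove $(1)\Rightarrow(2)$ by showing that the failure of (2) makes $\lambda I-T$ surjective, and to prove $(2)\Rightarrow(1)$ by exhibiting non-surjectivity directly rather than through weak-$\ast$ limits of truncated orbit sums.
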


 \begin{theorem} \label{t3} Let $\varphi$ be a homeomorphism of $K$ onto itself, $w \in C(K)$, and $T =wT_\varphi$. Assume that the set of all $\varphi$-periodic points is of first category in $K$. Let $\lambda \in \sigma(T) \setminus \{0\}$. The following conditions are equivalent
\begin{enumerate}
  \item $(\lambda I - T)C(K) = C(K)$.
  \item  $K = K_1 \cup K_2 \cup O$, where $K_1$, $K_2$, and $O$ are pairwise disjoint nonempty subsets of $K$ such that
      \begin{enumerate}[(a)]
        \item $K_1$ and $K_2$ are closed $\varphi$-invariant subsets of $K$.
        \item $\rho(T, C(K_1)) < |\lambda|$.
        \item The operator $T$ is invertible on $C(K_2)$ and $\rho(T^{-1}, C(K_2)) < |\lambda|^{-1}$.
        \item For any closed subset $F$ of $O$ we have
        \begin{equation}\label{eq8}
          \bigcap \limits_{n=1}^\infty cl \bigcup \limits_{j=n}^\infty \varphi^j(F) \subseteq K_2
        \end{equation}
        and
        \begin{equation}\label{eq9}
          \bigcap \limits_{n=1}^\infty cl \bigcup \limits_{j=n}^\infty \varphi^{-j}(F) \subseteq K_1.
        \end{equation}
      \end{enumerate}
\end{enumerate}
\end{theorem}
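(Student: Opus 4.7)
The plan is to prove both implications by exploiting the short exact sequence
\begin{equation*}
0 \to I(K_1 \cup K_2) \to C(K) \to C(K_1) \oplus C(K_2) \to 0
\end{equation*}
of $T$-invariant closed subspaces, together with the variational formula (\ref{eq11}) applied to closed $\varphi$-invariant subsets.

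For (2) $\Rightarrow$ (1), the induced operator $\lambda I - T$ on the quotient $C(K_1) \oplus C(K_2)$ is already invertible: on $C(K_1)$ by (b), which gives $\rho(T,C(K_1)) < |\lambda|$; and on $C(K_2)$ by (c), which gives $|\lambda| < 1/\rho(T^{-1},C(K_2))$, so $\lambda \notin \sigma(T,C(K_2))$. It then suffices to show surjectivity of $\lambda I - T$ restricted to the ideal $I(K_1 \cup K_2) \cong C_0(O)$. Given $g \in C_0(O)$, I would iterate the recurrence $\lambda f(k) = w(k)f(\varphi(k)) + g(k)$ forward; condition (\ref{eq8}) applied to $\mathrm{supp}(g)$ forces $\varphi^N(k)$ into any prescribed neighborhood of $K_2$ for $N$ sufficiently large, where the backward Neumann series $-\sum_{n \geq 0} \lambda^n T^{-n-1}$ takes over and supplies a well-defined limit. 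Condition (\ref{eq9}) plays the dual role for backward iteration near $K_1$. Continuity at the boundary of $O$ in $K_1 \cup K_2$ follows from the vanishing of $g$ at that boundary together with closedness of $K_1$, $K_2$.

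For (1) $\Rightarrow$ (2), I would set
\begin{equation*}
K_1 = \mathrm{cl}\bigcup\{F : F \text{ closed}, \varphi(F) \subseteq F, \rho(T, C(F)) < |\lambda|\},
\end{equation*}
\begin{equation*}
K_2 = \mathrm{cl}\bigcup\{F : \varphi(F) = F, \rho(T^{-1}, C(F)) < |\lambda|^{-1}\},
\end{equation*}
and $O = K \setminus (K_1 \cup K_2)$. Weak-$\ast$ compactness of $M_\varphi$ together with (\ref{eq11}) shows that these suprema are attained on the unions themselves, so (a), (b), (c) hold. The core of this direction is (\ref{eq8}): were some closed $F \subseteq O$ to have $\omega$-limit set meeting $K \setminus K_2$, then by maximality of $K_2$ there would be a $\varphi$-invariant probability measure $\mu$ with $\mu(K \setminus K_2) > 0$ and $\int \ln|w|\, d\mu \leq \ln|\lambda|$. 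The first-category hypothesis on periodic points excludes atomic solutions and forces a recurrent measure with $\int \ln|w|\, d\mu = \ln|\lambda|$; a Birkhoff-averaging argument then constructs a sequence of unit measures $\mu_n \in M(K)$ with $(T^\prime - \lambda I)\mu_n \to 0$, contradicting the boundedness below of $\lambda I - T^\prime$ (equivalent by the open mapping theorem to surjectivity of $\lambda I - T$). Condition (\ref{eq9}) follows symmetrically from the $\alpha$-limit analogue.

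The main obstacle is the ergodic-theoretic step just sketched: converting the set-theoretic failure of (\ref{eq8}) or (\ref{eq9}) into an explicit approximate eigensequence for $T^\prime$ at $\lambda$. The first-category hypothesis is essential here, since otherwise periodic orbits could supply isolated eigenvectors for $T$ that do not translate into any singular approximate eigensequence for the dual operator.
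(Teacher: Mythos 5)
This theorem is not proved in the present paper at all: it is quoted from \cite{Ki3}, where it is obtained through a chain of lemmas analyzing the adjoint $T^\prime$ on measures and orbit-segment constructions, so your attempt can only be measured against that external argument; and while its architecture is reasonable, both directions contain genuine gaps. In $(2)\Rightarrow(1)$ the quotient step is fine, but the solution on the ideal $C_0(O)$ cannot be produced the way you describe. For $k\in O$ the forward orbit tends to $K_2$, where by (c) the products $|w_n(k)|$ eventually grow like $\mu^n$ with $\mu>|\lambda|$, so the forward series $\sum_n\lambda^{-n-1}w_n(k)g(\varphi^n(k))$ diverges; the backward orbit tends to $K_1$, where by (b) the products $|w_n(\varphi^{-n}(k))|$ decay below $|\lambda|^n$, so the backward Neumann series $-\sum_n\lambda^nT^{-n-1}$ diverges there as well. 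Neither series ``takes over''; the actual construction has to exploit the non-uniqueness of the solution along each orbit (choose a seed value and propagate in both directions, where propagation is eventually contractive), and it needs uniform bounds coming from the uniformly bounded transit time of the compact set $K\setminus(V_1\cup V_2)\subseteq O$ (obtained from (\ref{eq8})--(\ref{eq9}) by compactness), plus an approximation step, since for $g\in C_0(O)$ the support of $g$ need not be a closed subset of $O$, so (\ref{eq8}) cannot be applied to it directly. None of this, which is the technical heart of the implication, appears in your sketch.

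In $(1)\Rightarrow(2)$ one step is not merely missing but incorrect. First, defining $K_1$ and $K_2$ as closures of unions of invariant sets does not yield (b) and (c): the closure of an increasing union of invariant sets with $\rho<|\lambda|$ can support new invariant measures with exponent $\ge\ln|\lambda|$ (e.g.\ invariant circles carrying an irrational rotation accumulating on a limit circle), so weak-$\ast$ compactness of $M_\varphi$ and (\ref{eq11}) alone do not give (b), (c); hypothesis (1) must already be used here, and disjointness and nonemptiness of $K_1$, $K_2$, $O$ are never addressed. Second, your key reduction --- if (\ref{eq8}) fails, produce an invariant $\mu$ with $\mu(K\setminus K_2)>0$ and $\int\ln|w|\,d\mu\le\ln|\lambda|$, and conclude that any such $\mu$ yields an approximate eigensequence for $T^\prime$ contradicting boundedness below --- proves too much: every invariant measure supported on $K_1$ satisfies both conditions by (b) and (\ref{eq11}), yet (1) holds whenever (2) does with $K_1\neq\emptyset$. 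Hence the mere existence of such a measure is no obstruction; what must be exploited is that the offending limit dynamics is reached by forward orbits emanating from $O$, and the approximate eigenmeasures have to be assembled from weighted point masses along orbit segments transiting through $O$ (this is where the first-category hypothesis and the string/measure machinery of \cite{Ki3} genuinely enter). As written, the ergodic-averaging step does not establish (\ref{eq8}) or (\ref{eq9}), so the hard direction is not proved.
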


We will end this section with the following definition.

\begin{definition} \label{d3} Let $K$ be a compact Hausdorff space and $\varphi : K \rightarrow K$ be a surjection. We will say that the point $k$ is \textbf{perfectly periodic} if $k$ is periodic and $\varphi^{(-1)}(\{k, \varphi(k), \ldots \varphi^{p-1}(k)\}) = \{k, \varphi(k), \ldots \varphi^{p-1}(k)\}$, where $p$ is the period of $k$.
  \end{definition}

\section{Conditions for $\lambda I - T$ to be lower semi-Fredholm. The case $\lambda < \rho_{min}(T)$.}

In this section we will obtain a criterion for the operator $T = wT_\varphi$ to be lower semi-Fredholm, providing that $\varphi : K \rightarrow K$ is a non-invertible surjection. The answer involves the notion of \textbf{almost homeomorphism} of a compact Hausdorff space introduced in~\cite{FK}

\begin{definition} \label{d1} Let $K$ be a compact Hausdorff space and $\varphi : K \rightarrow K$ be a continuous surjection. The map $\varphi$ is called an almost homeomorphism of $K$ if there is a finite subset $S$ of $K$ such that the map $\varphi : (K \setminus S) \rightarrow (K \setminus \varphi(S))$ is a homeomorphism.
\end{definition}

\begin{definition} \label{d2} We say that a compact Hausdorff space $K$ belongs to the class $\mathcal{AH}$ if every almost homeomorphism of $K$ is a homeomorphism.
\end{definition}

\begin{theorem} \label{t4} Let $\varphi$ be a continuous non-invertible \footnote{Because the case when $\varphi$ is a homeomorphism of $K$ onto itself was considered in~\cite{Ki3}, in all the statements in the current paper we assume that the surjection $\varphi$ is not invertible.} map of a compact Hausdorff space $K$ onto itself and let $w \in C(K)$. Assume that $|w| > 0$ on $K$. Let $T = wT_\varphi$ be the corresponding weighted composition operator on $C(K)$. The following conditions are equivalent
\begin{enumerate}[(1)]
  \item The operator $\lambda I - T$ is lower semi-Fredholm for every $\lambda \in \rho_{min}(T)\mathds{D}$.
  \item The operator $\lambda I - T$ is Fredholm for every $\lambda \in \rho_{min}(T)\mathds{D}$.
  \item The map $\varphi$ is an almost homeomorphism of $K$.
\end{enumerate}
Moreover, for every $\lambda \in \rho_{min}(T)\mathds{D}$
\begin{multline}\label{eq10}
  ind(\lambda I - T) = def(\lambda I - T) = \\
   = card \{(p,q) : \; p,q \in K, p \neq q, \varphi(p) = \varphi(q) \}.
\end{multline}
\end{theorem}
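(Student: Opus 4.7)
The plan is to prove the cycle $(2) \Rightarrow (1) \Rightarrow (3) \Rightarrow (2)$ together with the quantitative formula \eqref{eq10}; the implication $(2) \Rightarrow (1)$ is immediate. A preliminary step is to reformulate condition (3) as: \emph{the set $\mathcal{C} = \{(p,q) : p \neq q,\ \varphi(p) = \varphi(q)\}$ of collision pairs is finite}. The forward direction follows directly from Definition~\ref{d1}, since any collision pair must lie in $S \times S$; for the converse, if $\mathcal{C}$ is finite, let $S$ be its projection, so $\varphi|_{K \setminus S} : K \setminus S \to K \setminus \varphi(S)$ is a continuous bijection, and a standard subnet/compactness argument in the compact Hausdorff space $K$ forces the inverse to be continuous.

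For $(1) \Rightarrow (3)$, I would argue the contrapositive via Proposition~\ref{p2} by constructing infinitely many linearly independent eigenvectors of $T'$ at $\lambda$. For each $(p, q) \in \mathcal{C}$ with $r = \varphi(p) = \varphi(q)$, and arbitrary backward $\varphi$-orbits $p = p_1, p_2, \ldots$ and $q = q_1, q_2, \ldots$ (so $\varphi(p_{n+1}) = p_n$), the atomic measure
\begin{equation*}
\mu_{p,q} = w(q)\delta_p - w(p)\delta_q + \sum_{n \geq 2}\bigl(\alpha_n \delta_{p_n} + \beta_n \delta_{q_n}\bigr),
\end{equation*}
with coefficients determined by the recursion $T'\mu = \lambda\mu$ (the initial antisymmetric choice cancels the $\delta_r$-coefficient), is an exact eigenvector at $\lambda$. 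Iterating the recursion gives $|\alpha_n| = |w(q)|\,|\lambda|^{n-1}/|w_{n-1}(p_n)|$ and analogously for $|\beta_n|$. Convergence in $M(K)$ follows from the estimate that any weak-$\ast$ accumulation point of the Cesaro averages $n^{-1}\sum_{j < n}\delta_{k_j}$ along a backward orbit is $\varphi$-invariant, hence $\liminf n^{-1}\log|w_n(k_n)| \geq \log \rho_{\min}(T) > \log|\lambda|$. When $\mathcal{C}$ is infinite one extracts a subfamily of such $\mu_{p,q}$ with pairwise disjoint supports (either from infinitely many distinct branch points $r$, or from infinitely many preimages over a single $r$), producing an infinite linearly independent subset of $\ker(T' - \lambda I)$ and so $\lambda I - T \notin \Phi^-$.

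For $(3) \Rightarrow (2)$ and the defect formula, assume $\varphi$ is an almost homeomorphism with finite exceptional set $S$ and $|\lambda| < \rho_{\min}(T)$. First, $\ker(\lambda I - T) = \{0\}$: iterating $Tf = \lambda f$ yields $f \circ \varphi^n = (\lambda^n/w_n)\, f$; testing $|f|^2$ against an invariant $\mu_0$ with $\int\log|w|\, d\mu_0 = \log \rho_{\min}(T)$, the $\varphi$-invariance of $\mu_0$ gives $\int |f|^2 d\mu_0 = \int (|\lambda|^n/|w_n|)^2 |f|^2 d\mu_0 \to 0$ (by dominated convergence on $\operatorname{supp}\mu_0$), so $f \equiv 0$ on $\operatorname{supp}\mu_0$; the bi-invariance of $\{f = 0\}$ under $\varphi$ (using $|w|>0$ and propagating through preimages) together with surjectivity of $\varphi$ then extends $f \equiv 0$. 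To compute $\dim \ker(T' - \lambda I)$, solve the fiberwise recursion $\lambda c_j = \sum_{k \in \varphi^{-1}(j)} w(k) c_k$: outside the finite set $\varphi(S)$ each fiber is a singleton, so $c_j$ propagates uniquely (and summably by the same Cesaro-average estimate) along the unique backward orbit; at each branch point $r \in \varphi(S)$ the multi-preimage constraint yields a finite-dimensional freedom whose sum over branch points equals the right-hand side of \eqref{eq10}. Completeness of this atomic description (ruling out continuous eigenvectors) follows from the strict contraction $|\lambda|/\rho_{\min}(T) < 1$ shrinking the support of any putative non-atomic solution. Finite-dimensionality of $\ker(T' - \lambda I)$ then yields closed range and Fredholmness via standard duality, and combined with $\ker(\lambda I - T) = 0$ the identities in \eqref{eq10} follow.

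The main obstacle is the \emph{completeness} of the atomic eigenvector description for $\ker(T' - \lambda I)$ in the direction $(3) \Rightarrow (2)$: one must exclude continuous or singular measures from the adjoint eigenspace, which requires a delicate combination of the strict inequality $|\lambda| < \rho_{\min}(T)$ with invariant-measure spectral radius estimates. A secondary technical step is the disjointness selection in the construction of $(1) \Rightarrow (3)$, ensuring genuine linear independence of the countable family $\{\mu_{p,q}\}$ in $M(K)$.
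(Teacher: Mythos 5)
Your strategy --- computing $\ker(T'-\lambda I)$ by hand via atomic eigenmeasures --- is genuinely different from the paper's, but it has gaps at exactly the load-bearing points of $(3)\Rightarrow(2)$. First, the inference that finite-dimensionality of $\ker(T'-\lambda I)$ ``yields closed range and Fredholmness via standard duality'' is not valid: without closed range, $\dim\ker(T'-\lambda I)<\infty$ only bounds the codimension of the \emph{closure} of $(\lambda I-T)C(K)$ (a diagonal operator on $\ell^2$ with entries $1/n$ has trivial adjoint kernel but is not lower semi-Fredholm). Closed range, indeed bounded-belowness, must be proved separately, and this is precisely the paper's first step: using the description of $\sigma_{a.p.}(T)$ from~\cite{Ki1} (existence of $k$ with $|w_n(u)|\le|\lambda|^n$ for all $u\in\varphi^{(-n)}(k)$, all $n$) together with the invariant-measure bound defining $\rho_{min}(T)$, it shows $\rho_{min}(T)\mathds{D}\subseteq\sigma_r(T)$. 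Second, the completeness of your atomic description of $\ker(T'-\lambda I)$ --- excluding non-atomic eigenmeasures --- is the crux, and ``strict contraction shrinking the support'' is a slogan, not an argument; nothing in the sketch replaces it. The paper avoids both issues entirely: for $(3)\Rightarrow(2)$ it observes that $T_\varphi C(K)$ is a closed unital $C^\star$-subalgebra of $C(K)$ whose codimension is finite exactly when the collision set is finite, so $T=wT_\varphi$ (with $w$ invertible since $|w|>0$) is Fredholm at $\lambda=0$ with the stated index, and then transports Fredholmness and the index to every $\lambda\in\rho_{min}(T)\mathds{D}$ by stability of the index plus connectedness of the disc, which works because every $\alpha I-T$ there is already known to be bounded below. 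Your injectivity argument also does not close: there is no dominating function for $(|\lambda|^n/|w_n|)^2$ when $\min|w|<|\lambda|$, and propagating $f\equiv 0$ from $\operatorname{supp}\mu_0$ to all of $K$ is unjustified (a fully invariant zero set containing $\operatorname{supp}\mu_0$ need not be $K$). A correct elementary fix is to follow a backward orbit from a point where $f\neq 0$: the eigenvalue equation gives $|f(s_{-n})|=|w_n(s_{-n})|\,|\lambda|^{-n}|f(s_0)|$, and your own Ces\`aro-average estimate forces this to tend to infinity, contradicting boundedness of $f$.

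On $(1)\Rightarrow(3)$, the measures $\mu_{p,q}$ are plausible, but ``arbitrary backward orbits'' and the extraction of ``pairwise disjoint supports'' need real care: Theorem~\ref{t4} carries no first-category hypothesis on periodic points, so backward orbits may be eventually periodic, may pass through $r$ or through one another, and the recursion as written degenerates in those cases (on a cycle of period $p$ one only knows $|w_p|\ge\rho_{min}(T)^p>|\lambda|^p$, so the cyclic case must be argued separately). The paper sidesteps this direction: once the disc lies in $\sigma_r(T)$, lower semi-Fredholm implies Fredholm, and Fredholmness of $T$ at $\lambda=0$ already forces $T_\varphi C(K)$ to have finite codimension, which happens only when $\varphi$ identifies finitely many points, i.e.\ is an almost homeomorphism. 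Your preliminary reduction of (3) to finiteness of the collision set, including the subnet argument for continuity of the inverse, is correct and consistent with what the paper uses implicitly.
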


\begin{proof} $(2) \Rightarrow (1)$. This implication  is trivial.

\noindent $(1) \Rightarrow (2)$. It is sufficient to prove that $\ker{(\lambda I - T)} = 0$ if $|\lambda| < \rho_{min}(T)$. Moreover, we will prove that $\rho_{min}(T)\mathds{D} \subseteq \sigma_r(T)$. Indeed, if $\lambda \in \sigma_{a.p.}(T)$ then (see~\cite{Ki1}) there is $k \in K$ such that for any $u \in \varphi^{(-n)}(k)$ and for any $n \in \mathds{N}$ we have
\begin{equation}\label{eq13}
  |w_n(u)| \leq |\lambda|^n.
\end{equation}
 It is immediate to see from~(\ref{eq13}) and~(\ref{eq12}) that $|\lambda| \geq \rho_{min}(T)$.

 \noindent $(2) \Rightarrow (3)$. This implication is also trivial.

 \noindent $(3) \Rightarrow (2)$. First notice that $T_\varphi C(K)$ is a closed unital $C^\star$ subalgebra of $C(K)$ and therefore $(3)$ implies that the operator $T_\varphi$ is Fredholm and that
 \begin{equation*}
   ind(T_\varphi) = card \{(p,q) : \; p,q \in K, p \neq q, \varphi(p) = \varphi(q) \}.
 \end{equation*}
 Because the operator of multiplication by $w$ is invertible in $C(K)$ we see that $T$ is Fredholm and $ind(T) = ind(T_\varphi)$.

 To prove the last statement of the theorem recall that the index of a semi-Fredholm operator is stable under small norm perturbations. Therefore the set $A = \{\lambda \in \rho_{min}(T)\mathds{D} : ind(\lambda I - T) = ind(T)\}$ is open in $\mathds{C}$. Assume, contrary to our claim, that there is $\alpha \in \rho_{min}(T)\mathds{D} \cap \partial A$. The operator $\alpha I - T$ is upper semi-Fredholm (because $\alpha \in \sigma_r(T)$) and $ind(\alpha I - T) \neq ind(T)$. But it contradicts the stability of index under small norm perturbations.
\end{proof}

If we do not assume that the weight $w$ is invertible in $C(K)$ the corresponding statement becomes more complicated. Let us denote by $Z(w)$ the set of all zeros of $w$ in $K$, by $P$ the subset of $Z(w)$ that consists of $\varphi$-periodic points, and by $S$ the smallest $\varphi^{(-1)}$-invariant subset of $K$ that contains $P$. Let $K_1 = K \setminus S$.

\begin{theorem} \label{t5} Let $\varphi$ be a continuous non-invertible map of a compact Hausdorff space $K$ onto itself and let $w \in C(K)$. Let $T = wT_\varphi$ be the corresponding weighted composition operator on $C(K)$. The following conditions are equivalent
\begin{enumerate}
  \item $T$ is lower semi-Fredholm.
  \item $T$ is Fredholm.
  \item $\varphi$ is an almost homeomorphism of $K$ and every point of the set $Z(w)$ is isolated in $K$.
\end{enumerate}
Moreover, if $T$ is Fredholm then $\lambda I - T$ is Fredholm for any $\lambda \in \rho_{min}(T, C(K_1))\mathds{D}$ \footnote{It follows from (3) that $K_1$ is a compact subspace of $K$.} and
\begin{multline}\label{eq14}
  ind(\lambda I - T) = def(\lambda I - T) = \\
  = card \{(p,q) : \; p,q \in K, p \neq q, \varphi(p) = \varphi(q) \}.
\end{multline}
  \end{theorem}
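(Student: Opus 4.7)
The plan is to handle the equivalence $(1)\Leftrightarrow(2)\Leftrightarrow(3)$ first via two main reductions and then to deduce the ``moreover'' formula by a reduction to Theorem~\ref{t4} on $K_1$. The implication $(2)\Rightarrow(1)$ is trivial; I would prove $(1)\Rightarrow(3)$ by producing singular sequences in $\ker T'$ (via Proposition~\ref{p2}) whenever any part of (3) fails, and $(3)\Rightarrow(2)$ by factoring $T=M_wT_\varphi$ and showing each factor is Fredholm.

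For $(1)\Rightarrow(3)$, first suppose some $k_0\in Z(w)$ is not isolated and pick pairwise distinct $k_n\to k_0$. The evaluation functionals $\delta_{k_n}\in C(K)'$ are norm-one, pairwise at distance $2$ by Urysohn (hence form a singular sequence), and $T'\delta_{k_n}=w(k_n)\delta_{\varphi(k_n)}\to 0$ since $w(k_0)=0$; Proposition~\ref{p2} then forces $0\in\sigma_{lsf}(T)$, contradicting (1). Hence $Z(w)$ is discrete and closed, therefore clopen, and therefore finite. Next, assume there are infinitely many ordered pairs $(p,q)$ with $p\neq q$ and $\varphi(p)=\varphi(q)$. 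If some $y$ has infinite $\varphi^{-1}(y)$, choose distinct $k_n\in\varphi^{-1}(y)$ and a fixed $k_1\in\varphi^{-1}(y)\setminus Z(w)$ (possible since $Z(w)$ is finite); the functionals $\nu_n=w(k_1)\delta_{k_n}-w(k_n)\delta_{k_1}\in\ker T'$ are linearly independent for $n\geq 2$. Otherwise every $\varphi^{-1}(y)$ is finite but infinitely many $y_n$ satisfy $|\varphi^{-1}(y_n)|\geq 2$; after discarding the finitely many $y_n$ whose fiber meets $Z(w)$, pick $p_n\neq q_n\in\varphi^{-1}(y_n)\setminus Z(w)$, and the $\nu_n=w(q_n)\delta_{p_n}-w(p_n)\delta_{q_n}\in\ker T'$ have pairwise disjoint supports, hence are linearly independent. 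In either case $\dim\ker T'=\infty$, contradicting $def(T)<\infty$. Therefore the branching set $S$ of colliding points is finite; a short verification (using surjectivity of $\varphi$ and compactness) shows $\varphi\colon K\setminus S\to K\setminus\varphi(S)$ is a continuous closed bijection, that is, an almost homeomorphism.

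For $(3)\Rightarrow(2)$, factor $T=M_wT_\varphi$. Since $Z(w)$ is finite and clopen, $M_w$ is Fredholm with $\ker M_w\cong\mathds{C}^{|Z(w)|}$ and range $\{g\in C(K):g|_{Z(w)}=0\}$; and $T_\varphi$ is Fredholm by the closed-unital-$C^*$-subalgebra argument from the proof of Theorem~\ref{t4}. Hence $T$ is Fredholm. For the moreover part, I would first observe that $K_1=K\setminus S$ is compact: since $P\subseteq Z(w)$ is open (being a set of isolated points), $S=\bigcup_{n\geq 0}\varphi^{(-n)}(P)$ is open as a union of preimages of an open set, and $\varphi(K_1)\subseteq K_1$ by the $\varphi^{(-1)}$-invariance of $S$. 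The zeros of $w$ lying in $K_1$ are in $Z(w)\setminus P$, hence non-periodic isolated points, hence wandering, so every $\varphi$-invariant probability measure on $K_1$ avoids them and $\rho_{min}(T,C(K_1))>0$. Theorem~\ref{t4} applied to the induced weighted composition operator on $C(K_1)$ then yields Fredholmness and the index identity on the disk $\rho_{min}(T,C(K_1))\mathds{D}$, and the index-continuity argument from the end of the proof of Theorem~\ref{t4} extends this throughout the disk.

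The main obstacle will be transferring the Fredholm property from $C(K_1)$ back to $C(K)$ in the moreover part: because $K_1$ is closed but not open in $K$, there is no direct splitting $C(K)=C(K_1)\oplus C(S)$. One must show that the piece of $C(K)$ ``living on $S$'' contributes nothing to either $\ker(\lambda I-T)$ or to the cokernel when $|\lambda|<\rho_{min}(T,C(K_1))$, which should follow from the recursive structure $S=\bigcup_n\varphi^{(-n)}(P)$ together with the estimate $|\lambda|^n\ll|w_n(k)|$ along orbits trapping into $P$.
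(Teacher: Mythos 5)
Your handling of the equivalence $(1)\Leftrightarrow(2)\Leftrightarrow(3)$ is essentially correct, and it takes a partly different (and legitimate) route from the paper: you get the necessity of (3) by exhibiting infinitely many linearly independent functionals in $\ker T'$ when the colliding pairs are infinite, and you get $(3)\Rightarrow(2)$ from the factorization $T=M_wT_\varphi$ with both factors Fredholm, whereas the paper argues with singular sequences (Propositions~\ref{p1} and~\ref{p2}), the modified weight $w_1$, and an appeal to Theorem~\ref{t4}. The genuine gap is in the ``moreover'' clause. You propose to apply Theorem~\ref{t4} to the induced operator on $C(K_1)$, but $K_1=K\setminus S$ removes only the inverse orbit of the \emph{periodic} zeros $P$; the non-periodic zeros of $w$ (and their preimages that never reach $P$) remain in $K_1$, so the hypothesis $|w|>0$ of Theorem~\ref{t4} fails there. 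Observing that $\rho_{min}(T,C(K_1))>0$ does not repair this: the proof of Theorem~\ref{t4} uses the invertibility of multiplication by $w$ in the implication $(3)\Rightarrow(2)$ and the nonvanishing of $w$ in the $\sigma_{a.p.}$ estimate, so the citation is unjustified (indeed, Fredholmness of $\lambda I-T$ on $C(K_1)$ for $0<|\lambda|<\rho_{min}(T,C(K_1))$ with zeros present is essentially the statement being proved).

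Second, the transfer from $C(K_1)$ back to $C(K)$ is left open by your own admission, and the heuristic you offer points the wrong way: for $k\in S$ the forward orbit enters $P\subseteq Z(w)$, so $w_n(k)=0$ for all large $n$; the estimate $|\lambda|^n\ll|w_n(k)|$ is false along exactly those orbits. The delicate issue is the cokernel: $S$ is backward invariant and countable, and measures supported on backward strings inside $S$ are not controlled by any such inequality, so ``$S$ contributes nothing'' is precisely what needs proof. Note also that reducing to $K_1$ would at best give the count of colliding pairs in $K_1$, while~(\ref{eq14}) counts pairs in all of $K$ (pairs may involve points of $S$). The paper sidesteps all of this with a different device: perturb the weight only on the finite clopen set $Z(w)$, setting $w_\varepsilon=w$ off $Z(w)$ and $w_\varepsilon=\varepsilon$ on $Z(w)$, so that $|w_\varepsilon|>0$ on all of $K$; Theorem~\ref{t4} then applies to $T_\varepsilon=w_\varepsilon T_\varphi$ on $C(K)$, giving Fredholmness and the index formula on $\rho_{min}(T_\varepsilon)\mathds{D}\supseteq\rho_{min}(T,C(K_1))\mathds{D}$, and since $T-T_\varepsilon$ is a finite-rank operator, both Fredholmness and the index pass to $\lambda I-T$ with no restriction to $K_1$ and no separate analysis of $S$. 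Incorporating this perturbation step (or an equivalent argument that genuinely controls kernel and cokernel over $S$) is what your proposal is missing.
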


  \begin{proof} $(3) \Rightarrow (2)$. Let $f_n \in C(K)$ be such that $\|f_n\| = 1$ and $Tf_n \rightarrow 0$. Condition (3) guarantees that $|w| > c >0$ on $K \setminus Z(w)$. Therefore $f_n \rightarrow 0$ uniformly on the set $\varphi(K \setminus Z(w))$. Indeed, $|f_n(\varphi(k))| \leq c^{-1}\|Tf_n\|, k \in K \setminus Z(w)$. It follows from the equality $\varphi(K \setminus Z(w)) \cup \varphi(Z(w)) = K$ that $K \setminus \varphi(Z(w)) \subseteq \varphi(K \setminus Z(w))$.

  Let $B$ be the subset of $\varphi(z(w))$ that consists of points isolated in $K$. Notice that the set $B$ is either finite or empty. Let $A = \varphi(z(w) \setminus B$. We claim that $|f_n(t)| \leq c^{-1}\|Tf_n\|, t \in A$. Indeed, let us fix $t \in A$. Because the point $t$ is not isolated in $K$ and the set $\varphi(z(w))$ is finite there is a net $t_\alpha$ in $K$ that converges to $t$ and such that $t_\alpha \not \in \varphi(z(w))$.  Thus, $|f_n(t_\alpha)| \leq c^{-1}\|Tf_n\|$ and therefore, $|f_n(t)| \leq c^{-1}\|Tf_n\|, t \in A$.
  The sequence $f_n$ converges uniformly to zero on the set $K \setminus B$. Because the set $B$ is either empty or finite and consists of points isolated in $K$, there is a subsequence of the sequence $f_n$ that converges in $C(K)$.
   By Proposition~\ref{p1} $T$ is upper semi-Fredholm.

  Assume now that there is a sequence $\mu_n$ in $C(K)'$ such that $\|\mu_n\| = 1$ and $\mu_n \rightarrow 0$. Let $\nu_n = \mu_n | Z(w)$ and $\tau_n = \mu_n - \nu_n$. Then $T'\nu_n = 0$, and therefore $T'\tau_n \rightarrow o$. Let
  \begin{equation*}
 w_1(k)=   \left\{
      \begin{array}{ll}
        w(k), & \hbox{if $k \in K \setminus Z(w)$;} \\
        1, & \hbox{if $k \in Z(w)$,}
      \end{array}
    \right.
  \end{equation*}
and let $T_1 = w_1 T_\varphi$. Then $T_1'\tau_n = T'\tau_n \rightarrow 0$ and by Theorem~\ref{t4} the sequence $\tau_n$ contains a convergent subsequence. Because the sequence $\nu_n$ obviously has a convergent subsequence it follows that $\mu_n$ contains a convergent subsequence. By Proposition~\ref{p2} $T$ is lower semi-Fredholm and therefore, Fredholm.

 \noindent $(2) \Rightarrow (1)$. This implication is trivial.

\noindent $(1) \rightarrow (3)$. Assume $(1)$. If $k \in Z(w)$ and $k$ is not isolated in $K$ we can find pairwise distinct points $k_n \in K$ such that $|w(k_n)| \leq \frac{1}{n}, n \in \mathds{N}$. The sequence $\delta_{k_n} \in C(K)'$ is singular and $T'\delta_{k_n} \rightarrow 0$, a contradiction. Let us fix an $\varepsilon >0$ and let
 \begin{equation} \label{eq15}
 w_\varepsilon(k)=   \left\{
      \begin{array}{ll}
        w(k), & \hbox{if $k \in K \setminus Z(w)$;} \\
        \varepsilon, & \hbox{if $k \in Z(w)$,}
      \end{array}
    \right.
\end{equation}
For any sufficiently small $\varepsilon$ the operator $T_\varepsilon = w_\varepsilon T_\varphi$ is lower semi-Fredholm, and by Theorem~\ref{t4} $\varphi$ is an almost homeomorphism of $K$.

Having proved the equivalence of (1), (2), and (3) we will prove now the last statement of the theorem.

  Assume one of equivalent conditions (1) - (3). Let $S = \bigcup \limits_{n=1}^\infty \varphi^{(-n)}(P)$. Because $\varphi$ is an almost homeomorphism and the points of $P$ are isolated in $K$, the set $S$ is an at most countable open subset of $K$. Hence, the set $K_1 = K \setminus S$ is a compact subset of $K$ and $\varphi (K_1) = \varphi^{(-1)}(K_1) = K_1$.

  Let $\varepsilon >0$ and let $T_\varepsilon$ be as above. By Theorem~\ref{t4} for every $\lambda \in \rho_{min}(T_\varepsilon)\mathds{D}$ the operator $\lambda I - T_\varepsilon$ is Fredholm. But $\rho_{min}(T,C(K_1)) \leq \rho_{min}(T_\varepsilon)$ and the operator $T - T_\varepsilon$ is finite dimensional. Hence the last statement of the theorem follows.
   \end{proof}
   \bigskip

   \section{ Conditions for $\lambda I - T$ to be lower semi-Fredholm. The case $|\lambda| > \rho_{mim}(T)$.}

   We start with the following lemma.

   \begin{lemma} \label{l1} Let $K$ be a compact Hausdorff space and $\varphi: K \rightarrow K$ be a continuous non-invertible surjection. Let $\Phi$ be the homeomorphism of $\mathbf{K}$ associated with $\varphi$ (see Definition~\ref{d4}). The following conditions are equivalent
   \begin{enumerate}
     \item The set of perfectly $\varphi$-periodic points (see Definition~\ref{d3}) is of first category in K.
     \item The set of $\Phi$-periodic points is of first category in $\mathbf{K}$.
   \end{enumerate}
      \end{lemma}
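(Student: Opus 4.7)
The plan is to pass between $\Phi$-periodic strings in $\mathbf{K}$ and perfectly $\varphi$-periodic points in $K$ through the coordinate projection $\pi_0 \colon \mathbf{K} \to K$, $\mathbf{k} \mapsto k_0$. For each $p \in \mathds{N}$, set
\[
F_p = \{\mathbf{k} \in \mathbf{K} : \Phi^p(\mathbf{k}) = \mathbf{k}\},
\]
which is closed in $\mathbf{K}$, and let $G_p \subseteq K$ consist of those perfectly $\varphi$-periodic points whose exact period divides $p$. Then the set of all $\Phi$-periodic points equals $\bigcup_p F_p$ and the set of all perfectly $\varphi$-periodic points equals $\bigcup_p G_p$; both are countable unions, so the strategy is a term-by-term Baire-category analysis.

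The key technical step is to show that $F_p$ has nonempty interior in $\mathbf{K}$ if and only if $G_p$ has nonempty interior in $K$. I would verify this by examining a basic cylinder $\mathcal{U} = \{\mathbf{k} : k_i \in V_i,\ -N \leq i \leq N\}$ of $\mathbf{K}$ and asking when $\mathcal{U} \subseteq F_p$. The forward requirement $k_{n+p} = k_n$ for $n \geq -N$ reduces via $\varphi$ to $\varphi^p(k_{-N}) = k_{-N}$. The backward coordinates $k_{-N-j}$ ($j \geq 1$) are \emph{unconstrained} by $\mathcal{U}$, so the demand that every admissible branch still gives a $\Phi^p$-fixed string forces each $\varphi^{(-1)}(k_{-N-j+1})$ to be a single point, equal to the corresponding previous orbit element; propagating this around the cycle yields $\varphi^{(-1)}(\{k_{-N},\varphi(k_{-N}),\ldots,\varphi^{p-1}(k_{-N})\}) = \{k_{-N},\varphi(k_{-N}),\ldots,\varphi^{p-1}(k_{-N})\}$, i.e.\ $k_{-N} \in G_p$. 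Conversely, if $V \subseteq G_p$ is nonempty and open then $\pi_0^{-1}(V)$ is a nonempty open subset of $\mathbf{K}$ contained in $F_p$, because every $\varphi$-string over a perfectly periodic point is forced to be $\Phi$-periodic.

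Next I would show that $G_p$ is closed in $K$. Since $\varphi$ is an open surjection (the standing assumption of Section~4), the multi-valued preimage $\varphi^{(-1)}$ is both upper and lower semicontinuous, hence Vietoris-continuous. Consequently the compact-set-valued maps $k \mapsto \{k, \varphi(k), \ldots, \varphi^{p-1}(k)\}$ and $k \mapsto \bigcup_{i=0}^{p-1} \varphi^{(-1)}(\varphi^i(k))$ are both Vietoris-continuous functions from $K$ into the Hausdorff space of compact subsets of $K$; the set $G_p$ is exactly the locus where these two maps agree, intersected with the closed set $\{k : \varphi^p(k) = k\}$, and is therefore closed.

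The lemma now follows from the Baire Category Theorem applied to the compact Hausdorff (hence Baire) spaces $K$ and $\mathbf{K}$. For $(1) \Rightarrow (2)$, if $\bigcup_p G_p$ is of first category in $K$ then each $G_p$ is, hence has empty interior, so by the cylinder equivalence $F_p$ has empty interior in $\mathbf{K}$; being closed, $F_p$ is nowhere dense, and $\bigcup_p F_p$ is of first category. For $(2) \Rightarrow (1)$, if $\bigcup_p F_p$ is of first category then each closed $F_p$ is nowhere dense, so each $G_p$ has empty interior; since $G_p$ is closed it is in fact nowhere dense, and so $\bigcup_p G_p$ is of first category. The main obstacle is the cylinder-analysis step that ties the interior of $F_p$ in $\mathbf{K}$ to the interior of $G_p$ in $K$; the closedness of $G_p$, which is where openness of $\varphi$ enters, is what upgrades ``empty interior'' to ``nowhere dense'' and makes the reverse implication go through cleanly.
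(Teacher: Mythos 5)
Your direction $(1)\Rightarrow(2)$, and the interior-transfer step it rests on, are essentially sound, and this is a genuinely different route from the paper's (the paper argues by contradiction, constructing non-periodic strings converging to a string interior to $\mathbf{P}_n$, and lifting an open invariant subset of $P_n$ by $\pi_0^{-1}$; you instead do a term-by-term Baire analysis with the closed sets $F_p$). One point you should make explicit in the cylinder analysis: since the forward coordinates of a string are determined by the leftmost constrained coordinate and backward coordinates can always be extended (surjectivity of $\varphi$), a nonempty basic cylinder of $\mathbf{K}$ is exactly $\pi_{-N}^{-1}(W)$ for a nonempty open $W\subseteq K$; this is what converts your conclusion ``$k_{-N}\in G_p$ for every string in the cylinder'' into ``$W\subseteq G_p$, so $G_p$ has nonempty interior.'' That observation needs only continuity and surjectivity, so the half of the lemma that the paper actually uses later (via Corollary~\ref{c2} in Lemmas~\ref{l2} and~\ref{l3}) is fully covered by your argument.

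The genuine gap is in $(2)\Rightarrow(1)$: you need each $G_p$ to be closed, and you obtain this only by assuming $\varphi$ is open, which is not a hypothesis of Lemma~\ref{l1} (nor of Corollary~\ref{c2} or Lemma~\ref{l2}), and is not used in the paper's proof; openness is imposed only later, in Lemma~\ref{l3} and Theorem~\ref{t6}. Your Vietoris-continuity argument is correct when $\varphi$ is open, but without openness $G_p$ need not be closed: let $K$ consist of two convergent sequences with their limits, $a_n\to a$ and $b_m\to b$, and set $\varphi(a_n)=a_n$, $\varphi(a)=a$, $\varphi(b)=b$, $\varphi(b_0)=a$, $\varphi(b_m)=b_{m-1}$ for $m\ge 1$; this is a continuous non-invertible surjection (not open), every $a_n$ is perfectly periodic, but the limit $a$ is not, so $G_1$ is not closed. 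Without closedness, ``empty interior'' does not upgrade to ``nowhere dense'' (a set can be non-meager with empty interior), so your $(2)\Rightarrow(1)$ is not established in the generality of the statement: as written, your proof proves the lemma only under the additional hypothesis that $\varphi$ is open. (In the example both (1) and (2) fail, so it does not threaten the lemma itself, only your closedness step.) To prove $(2)\Rightarrow(1)$ as stated you need an argument that does not pass through closedness of $G_p$, e.g.\ the paper's: from the failure of (1) extract an open nonempty $E\subseteq P_n$ with $\varphi^{(-1)}(E)=E$ and note that $\pi_0^{-1}(E)$ is then a nonempty open subset of $\mathbf{K}$ consisting of $\Phi$-periodic strings.
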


\begin{proof} Denote by $P_n$ (respectively, $\mathbf{P}_n$) the set of all $\varphi$-periodic (respectively, $\Phi$-periodic) points of period $n$.

\noindent $(1) \Rightarrow (2)$. Assume (1) and assume to the contrary that for some $n \in \mathds{N}$ we have $Int \; \mathbf{P}_n \neq \emptyset$. Considering, if necessary, the map $\Phi^n$ instead of $\Phi$ we can assume without loss of generality that $n =1$. The set
     $E = \{\mathbf{k}_0 : \mathbf{k} \in Int \; \mathbf{P}_1\}$ is an open subset of $P_1 = \{s \in K : \varphi(s) = s\}$ and $\varphi(E) = E$. Therefore, by (1) there is an $s_1 \in K$ such that $s_1$ is not a $\varphi$-periodic point and $\varphi(s_1) = k \in E$.  Let $\mathbf{k} \in \mathbf{K}$ be such that $(\mathbf{k}_n) = k, n \in \mathds{Z}$. Let $s_n : n \geq 2$ be a sequence of points in $K$ such that
     $\varphi(s_{n+1}) = s_n, n \in \mathds{N}$.  For any $m \in \mathds{N}$ consider the point $\mathbf{k}^m \in \mathbf{K}$ such that
     \begin{equation*}
     \mathbf{k}^m_n =  \left\{
         \begin{array}{ll}
           k, & \hbox{if $n \geq -m$}; \\
          s_{-(n+m)}, & \hbox{if $n < -m$.}
         \end{array}
       \right.
     \end{equation*}
     The points $\mathbf{k}^m, m \in \mathds{N}$, are not $\Phi$-periodic and they converge to $\mathbf{k}$ in $\mathbf{K}$, a contradiction.

     \noindent $(2) \Rightarrow (1)$. Assume (2) and assume to the contrary that for some $n \in \mathds{N}$ there is an open in $K$ nonempty subset $E$ of $P_n$ such that $E = \varphi^{(-1)}(E)$. Let $\mathbf{E} = \{\mathbf{k} \in \mathbf{K} : \mathbf{k}_0 \in E\}$. Then $\mathbf{E}$ is an open nonempty subset of $\mathbf{K}$ and $\mathbf{E} \subseteq \mathbf{P}_n$, a contradiction.
\end{proof}

     \begin{corollary} \label{c2} Let $K$ be a compact Hausdorff space and $\varphi: K \rightarrow K$ be a continuous non-invertible surjection. Let $\Phi$ be the homeomorphism of $\mathbf{K}$ associated with $\varphi$ (see Definition~\ref{d4}).

     If the set of all eventually $\varphi$-periodic points is of first category in $K$, then
the set of $\Phi$-periodic points is of first category in $\mathbf{K}$.
     \end{corollary}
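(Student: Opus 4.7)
The plan is to reduce the corollary directly to Lemma~\ref{l1} by observing that the hypothesis on eventually $\varphi$-periodic points is strictly stronger than the hypothesis on perfectly periodic points.

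First I would note that, by Definition~\ref{d3}, every perfectly $\varphi$-periodic point is in particular $\varphi$-periodic, and every $\varphi$-periodic point is trivially eventually $\varphi$-periodic (take $n = 0$). Hence the set of perfectly $\varphi$-periodic points is contained in the set of all eventually $\varphi$-periodic points. Since a subset of a first category set is of first category, the assumption that the eventually periodic set is meager in $K$ immediately gives that the set of perfectly $\varphi$-periodic points is meager in $K$.

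Having established condition (1) of Lemma~\ref{l1}, I would then invoke the implication $(1) \Rightarrow (2)$ of that lemma to conclude that the set of $\Phi$-periodic points is of first category in $\mathbf{K}$, which is exactly the assertion of the corollary.

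There is essentially no obstacle here: the entire content of the corollary is the inclusion ``perfectly periodic $\subseteq$ eventually periodic,'' combined with Lemma~\ref{l1}. The proof will therefore be only a couple of lines.
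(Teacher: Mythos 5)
Your reduction is correct and is exactly the paper's (implicit) argument: since every perfectly $\varphi$-periodic point is $\varphi$-periodic, hence eventually $\varphi$-periodic, the hypothesis forces the set of perfectly periodic points to be of first category, and the implication $(1)\Rightarrow(2)$ of Lemma~\ref{l1} then yields the conclusion. Nothing further is needed.
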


We will proceed with proving a series of lemmas needed for our main result in this section, Theorem~\ref{t6}.

\begin{lemma} \label{l2}. Let $K$ be a compact Hausdorff space, $\varphi : K \rightarrow K$ be a continuous non-invertible surjection, $w \in C(K)$, and $T = wT_\varphi$. Assume that
\begin{enumerate}
  \item $\lambda \in \sigma(T) \setminus \sigma(\mathbf{T})$ and $|\lambda| > \rho_{min}(T)$.
 \item The operator $\lambda I - T$ is lower semi-Fredholm.
\item The set of eventually $\varphi$-periodic points is of first category in $K$.
 \noindent Then $K = K_1 \cup K_2$ where $K_1$ and $K_2$ are nonempty closed disjoint subsets of $K$ such that
  \end{enumerate}
  \begin{itemize}
    \item $\varphi(K_i) = K_i, i = 1,2$.
    \item $K_2 \neq \emptyset$ and $\rho_{min}(T, C(K_2)) > |\lambda|$.
    \item The restriction of $\varphi$ on $K_2$ is an almost homeomorphism, but not a homeomorphism of $K_2$.
     \item $\rho(T, C(K_1)) < |\lambda|$ .
  \end{itemize}
\end{lemma}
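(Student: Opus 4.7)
The plan is to transfer the problem from $K$ to the associated compact space $\mathbf{K}$, on which $\Phi$ is a homeomorphism and the results of Theorems~\ref{t2}--\ref{t3} are directly available, and then project back via $\pi:\mathbf{K}\to K$, $\pi(\mathbf{k})=k_0$. By Corollary~\ref{c2}, hypothesis~(3) guarantees that the set of $\Phi$-periodic points in $\mathbf{K}$ is of first category. Since $\lambda\notin\sigma(\mathbf{T})$, the rotation invariance of $\sigma(\mathbf{T})$ places $|\lambda|$ in a gap between circular spectral components, and a Riesz projection along $|z|=|\lambda|$ furnishes a clopen $\Phi$-invariant decomposition $\mathbf{K}=\mathbf{K}_1\sqcup\mathbf{K}_2$ with $\rho(\mathbf{T},C(\mathbf{K}_1))<|\lambda|$ and $\rho_{min}(\mathbf{T},C(\mathbf{K}_2))>|\lambda|$; this may be viewed as the limiting case of Theorem~\ref{t3} with the ``borderline'' component $\mathbf{O}$ absent.

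The critical structural observation is that $\mathbf{K}_1$ may be taken to be $\pi$-saturated, i.e., $\pi^{-1}(\pi(\mathbf{K}_1))=\mathbf{K}_1$. The reason is that the forward norm growth $\|\mathbf{T}^n|_{C(\mathbf{K}_1)}\|=\sup_{\mathbf{k}\in\mathbf{K}_1}|w_n(k_0)|$ depends only on the base-point set $\pi(\mathbf{K}_1)$, so the Riesz projection factors through $\pi$. Defining $K_1=\pi(\mathbf{K}_1)$ and $K_2=\pi(\mathbf{K}_2)$, the $\pi$-saturation of $\mathbf{K}_1$ forces $K_1\cap K_2=\emptyset$, so that $K_2=K\setminus K_1$ is clopen in $K$. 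The equivariance $\pi\circ\Phi=\varphi\circ\pi$ together with $\Phi(\mathbf{K}_i)=\mathbf{K}_i$ yields $\varphi(K_i)=K_i$.

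The spectral estimates descend to $K$ via pushforward of invariant measures, since $\mathbf{w}=w\circ\pi$: directly, $\rho(T,C(K_1))=\rho(\mathbf{T},C(\mathbf{K}_1))<|\lambda|$. For the bound on $K_2$, observe that $\mathbf{K}_2$ coincides with the natural extension of $(K_2,\varphi|_{K_2})$---any $\varphi$-string with every coordinate in $K_2$ must lie in $\mathbf{K}_2$, by $\pi$-saturation of $\mathbf{K}_1$. Hence the $\Phi$-invariant probability measures on $\mathbf{K}_2$ correspond bijectively to the $\varphi|_{K_2}$-invariant probability measures on $K_2$, so $\rho_{min}(T,C(K_2))=\rho_{min}(\mathbf{T},C(\mathbf{K}_2))>|\lambda|$. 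Nonemptiness of $K_2$ follows from $\lambda\in\sigma(T)$ (otherwise $K=K_1$ would give $\rho(T)<|\lambda|$), and nonemptiness of $K_1$ from $|\lambda|>\rho_{min}(T)$ (otherwise $K=K_2$ would give $\rho_{min}(T)>|\lambda|$).

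Finally, since $K_2$ is clopen and $\varphi$-invariant, the restriction $\lambda I-T|_{C(K_2)}$ inherits lower semi-Fredholmness; combined with $|\lambda|<\rho_{min}(T,C(K_2))$, Theorem~\ref{t4} applied to $\varphi|_{K_2}$ forces $\varphi|_{K_2}$ to be an almost homeomorphism. It cannot be a homeomorphism, for otherwise $\pi:\mathbf{K}_2\to K_2$ would be a homeomorphism, making $T|_{C(K_2)}$ isomorphic to $\mathbf{T}|_{C(\mathbf{K}_2)}$ with spectrum equal to $\sigma(\mathbf{T}|_{C(\mathbf{K}_2)})\subseteq\sigma(\mathbf{T})$, and together with $\lambda\notin\sigma(T|_{C(K_1)})$ (from $\rho(T,C(K_1))<|\lambda|$) this would give $\lambda\notin\sigma(T)$, contradicting the hypothesis. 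The main obstacle in executing this plan is the opening step: Theorem~\ref{t3} is stated only for $\lambda\in\sigma$, so the clopen $\mathbf{K}=\mathbf{K}_1\sqcup\mathbf{K}_2$ decomposition in the resolvent regime must be produced directly by a Riesz-projection argument, and, crucially, the chosen $\mathbf{K}_1$ must be verified to be $\pi$-saturated---this is what ultimately makes the descent to a clopen decomposition of $K$ possible and drives every subsequent step of the argument.
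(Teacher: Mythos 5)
Your proposal follows essentially the same route as the paper: pass to the associated system $(\mathbf{K},\Phi,\mathbf{T})$, use hypothesis (3) through Corollary~\ref{c2} to get rotation invariance of $\sigma(\mathbf{T})$, so that $\lambda\mathds{T}\cap\sigma(\mathbf{T})=\emptyset$, split $\mathbf{K}$ into two closed $\Phi$-invariant pieces on which $\mathbf{T}$ is respectively of spectral radius less than $|\lambda|$ and invertible with $\rho(\mathbf{T}^{-1})<|\lambda|^{-1}$, push the pieces down by $p(\mathbf{k})=k_0$, and finish with Theorem~\ref{t4} plus a spectral contradiction for the ``not a homeomorphism'' claim. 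The one point of divergence is the step you yourself flag as the main obstacle: producing the clopen $\Phi$-invariant splitting of $\mathbf{K}$ in the resolvent regime. The paper does not manufacture this by hand; it is exactly Theorem 3.10 of \cite{Ki1}, which says that for a weighted composition operator with invertible $\Phi$ whose spectrum misses the circle $|\lambda|\mathds{T}$ the spectral decomposition is implemented by a partition of $\mathbf{K}$ into closed invariant sets. Note that your bare assertion that ``a Riesz projection furnishes a clopen decomposition of $\mathbf{K}$'' is not automatic for an arbitrary operator on $C(\mathbf{K})$: one must show the two spectral subspaces are closed ideals, i.e.\ that the Riesz projection is multiplication by a characteristic function, and that is precisely the content of the cited theorem, which exploits the disjointness-preserving structure of $\mathbf{T}$; likewise ``the limiting case of Theorem~\ref{t3}'' is not an argument, since Theorem~\ref{t3} concerns $\lambda\in\sigma(T)$. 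With that reference supplied, the rest of your descent is correct and in fact more explicit than the paper's ``immediate to see'': the saturation $p^{-1}(p(\mathbf{K}_1))=\mathbf{K}_1$ (better justified by the forward growth dichotomy $|w_n(k_0)|\leq Cr^n$ versus $|w_n(k_0)|\geq cs^n$, $r<|\lambda|<s$, than by ``the projection factors through $p$''), which yields disjointness and hence clopenness of $K_1,K_2$; the equality of the relevant spectral radii via pushforward of invariant measures; the nonemptiness argument from $\rho_{min}(T)<|\lambda|\leq\rho(T)$; and the identification of $T$ on $C(K_2)$ with $\mathbf{T}$ on $C(\mathbf{K}_2)$ when $\varphi|_{K_2}$ is a homeomorphism, giving the contradiction $\lambda\notin\sigma(T)$, which is the same mechanism the paper invokes via Theorem 3.10 of \cite{Ki1}.
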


\begin{proof} By (3), Lemma~\ref{l2}, and Theorem 3.7 in~\cite{Ki1} the spectrum $\sigma(\mathbf{T})$ is rotation invariant and therefore $\lambda \mathds{T} \cap \sigma(\mathbf{T}) = \emptyset$. By Theorem 3.10 in~\cite{Ki1} we have $\mathbf{K} = \mathbf{K}_1 \cup \mathbf{K}_2$ where $\mathbf{K}_1$ and $\mathbf{K}_2$ are disjoint $\Phi$-invariant closed subsets of $\mathbf{K}$, $\rho(\mathbf{T}, C(\mathbf{K}_1)) < |\lambda|$, the operator $\mathbf{T}$ is invertible on $C(\mathbf{K}_2)$ and $\rho(\mathbf{T}^{-1}, C(\mathbf{K}_2)) < |\lambda|$. Notice that the sets $\mathbf{K}_1$ and $\mathbf{K}_2$ cannot be empty because on the one hand, $\rho_{min}(\mathbf{T}) = \rho_{min}(T) > |\lambda|$, while on the other hand, $\rho(\mathbf{T}) = \rho(T) > |\lambda|$. Let $p : \mathbf{K} \rightarrow K$ be the map defined as $p(\mathbf{k}) = \mathbf{k}_0$ and let $K_1 = p(\mathbf{K_1})$ and $K_2 = p(\mathbf{K}_2)$. It is immediate to see that $K_1$ and $K_2$ are disjoint $\varphi$-invariant closed subsets of $K$ and that $\rho(T, C(K_1)) < |\lambda|$ while $\rho_{min}(T, C(K_2)) > |\lambda|$. The map $\varphi$ cannot be a homeomorphism of $K_2$ onto itself because by theorem 3.10 in~\cite{Ki1} it would imply that $\lambda \not \in \sigma(T)$, and by Theorem~\ref{t4} it must be an almost homeomorphism of $K_2$.
\end{proof}

\begin{lemma} \label{l3} Let $K$ be a compact Hausdorff space, $\varphi : K \rightarrow K$ be a continuous non-invertible surjection, $w \in C(K)$, and $T = wT_\varphi$. Assume that

\begin{enumerate}
\item $w$ is invertible in $C(K)$.
\item The map $\varphi$ is open.
  \item $\lambda \in \sigma(T) \cap \sigma(\mathbf{T})$ and $|\lambda| > \rho_{min}(T)$.
  \item The operator $\lambda I - T$ is lower semi-Fredholm.
  \item $(\lambda I - \mathbf{T})C(\mathbf{K}) = C(\mathbf{K})$.
    \item The set of eventually $\varphi$-periodic points is of first category in $K$.
  \end{enumerate}
 Then there are subsets $K_1$, $K_2$, and $Q$ of $K$ such that
\begin{enumerate}[(a)]
  \item The set $K_1$ is closed in $K$ and $\varphi(K_1)=\varphi^{(-1)}(K_1) = K_1$.
  \item $\rho(T, C(K_1)) < |\lambda|$.
  \item The set $K_2$ is closed in $K$, $\varphi(K_2) = K_2$, $\rho_{min}(T,C(K_2)) > |\lambda|$ and $\varphi$ is an almost homeomorphism of $K_2$.
      \item If
      \begin{equation*}
    K = K_1 \cup \bigcup \limits_{n=0}^\infty \varphi^{(-n)}(K_2)
  \end{equation*}
       then the set $K_2$ is open in $K$.
    \item The set $Q$,
    \begin{equation*}
   Q = K \setminus \Big{(} K_1 \cup \bigcup \limits_{n=0}^\infty \varphi^{(-n)}(K_2) \Big{)}
  \end{equation*}
   is open in $K$ and $\varphi(Q) = \varphi^{(-1)}(Q) = Q$.
  \item The sets $K_1, K_2$, and $Q$ are pairwise disjoint and
  \begin{equation*}
    K = K_1 \cup \bigcup \limits_{n=0}^\infty \varphi^{(-n)}(K_2) \cup Q.
  \end{equation*}

  \item Assume that $Q \neq \emptyset$ and let $E$ be a closed in $K$ subset of $Q$ and $V_1, V_2$ be open neighborhoods of $K_1$ and $K_2$, respectively. Then there is an $m \in \mathds{N}$ such that for any $n \geq m$ we have $\varphi^n(E) \subset V_2$ and $\varphi^{(-n)}(E) \subset V_1$.
   \item Assume that $Q \neq \emptyset$. Then there is an open neighborhood $V$ of $K_2$ such that $\varphi(V \cap Q) \subset V \cap Q$ and the restriction of $\varphi$ on $V \cap Q$ is one-to-one.
\end{enumerate}
\end{lemma}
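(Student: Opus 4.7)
The plan is to reduce to the homeomorphism case by passing to the shift space $\mathbf{K}$, apply Theorem~\ref{t3} to the associated weighted automorphism $\mathbf{T}$, and then project the resulting decomposition back to $K$ via $p:\mathbf{K}\to K$, $p(\mathbf{k})=\mathbf{k}_0$. By Corollary~\ref{c2} and hypothesis~(6), the $\Phi$-periodic points of $\mathbf{K}$ are of first category; combined with hypotheses~(3) and~(5), Theorem~\ref{t3} then yields a partition $\mathbf{K}=\mathbf{K}_1\cup\mathbf{K}_2\cup\mathbf{O}$ into pairwise disjoint sets with $\mathbf{K}_1,\mathbf{K}_2$ closed and $\Phi$-invariant, $\rho(\mathbf{T},C(\mathbf{K}_1))<|\lambda|$, $\mathbf{T}$ invertible on $C(\mathbf{K}_2)$ with $\rho(\mathbf{T}^{-1},C(\mathbf{K}_2))<|\lambda|^{-1}$, and the limit-set conclusions of Theorem~\ref{t3}(d) on $\mathbf{O}$. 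I then set $K_1=p(\mathbf{K}_1)$, $K_2=p(\mathbf{K}_2)$, and $Q=K\setminus\bigl(K_1\cup\bigcup_{n\ge 0}\varphi^{(-n)}(K_2)\bigr)$, and verify (a)--(h) in turn.

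Most of the conclusions are then routine. Both $K_i$ are closed (continuous images of compacta) and forward $\varphi$-invariant (inherited from $\Phi$-invariance of $\mathbf{K}_i$); via the isometric embedding $f\mapsto f\circ p$ of $C(K_i)$ into $C(\mathbf{K}_i)$, which intertwines the restrictions of $T$ and $\mathbf{T}$, one transports the spectral inequalities to $\rho(T,C(K_1))<|\lambda|$ and $\rho_{min}(T,C(K_2))>|\lambda|$, proving~(b) and the spectral part of~(c). That $\varphi|_{K_2}$ is an almost homeomorphism then follows from Theorem~\ref{t4} applied to $T|_{C(K_2)}$ once one checks that $\lambda I-T|_{C(K_2)}$ is lower semi-Fredholm, which uses hypothesis~(4) together with the quotient map $C(K)\to C(K_2)$. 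Disjointness of $K_1$ from $\bigcup_{n\ge 0}\varphi^{(-n)}(K_2)$ is a Krylov--Bogolyubov argument: a common point would, via its forward orbit, produce a $\varphi$-invariant probability measure $\mu$ with $\exp\int\ln|w|\,d\mu$ simultaneously at most $\rho(T,C(K_1))<|\lambda|$ and at least $\rho_{min}(T,C(K_2))>|\lambda|$. Properties~(g) and~(h), relevant only when $Q\ne\emptyset$, are obtained by lifting a closed $E\subset Q$ into $p^{-1}(E)\cap\mathbf{O}$, applying Theorem~\ref{t3}(d) to $\mathbf{T}$, projecting, and using compactness; the injective neighborhood in~(h) is extracted from invertibility of $\mathbf{T}$ on $C(\mathbf{K}_2)$ combined with the openness of $\varphi$.

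The technical heart of the lemma is establishing the backward invariance $\varphi^{(-1)}(K_1)=K_1$ in~(a) and, in parallel, the openness of $Q$ in~(e). Because $p$ is far from injective, $p(\mathbf{K}_1)$ is not automatically backward $\varphi$-invariant even though $\mathbf{K}_1$ is backward $\Phi$-invariant: given $k$ with $\varphi(k)\in K_1$, one only obtains a string $\mathbf{k}'$ with $\mathbf{k}'_0=k$ and $\Phi(\mathbf{k}')\in\mathbf{K}_1$, and one must then propagate membership from $\Phi(\mathbf{k}')$ back to $\mathbf{k}'$ itself. Hypothesis~(2), openness of $\varphi$, is precisely what makes the set-valued inverse $\varphi^{(-1)}$ lower semicontinuous and forces preimage branches to behave well enough that $K_1$ absorbs its $\varphi$-preimages and $K_1\cup\bigcup_{n\ge 0}\varphi^{(-n)}(K_2)$ is closed. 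I expect carrying out this step carefully, together with the parallel one-to-one neighborhood argument in~(h), to be the main difficulty.
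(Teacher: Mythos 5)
There is a genuine gap. Your reduction to the shift space $\mathbf{K}$, the application of Theorem~\ref{t3} (via Corollary~\ref{c2} and hypotheses (3), (5)), and the projection $p(\mathbf{k})=\mathbf{k}_0$ giving (a)--(d), (f), (g) all match the paper's route; note in passing that the backward invariance $\varphi^{(-1)}(K_1)=K_1$, which you single out as a main difficulty, is in fact easy: since $p$ is onto, $K=K_1\cup K_2\cup O$ with the three projections pairwise disjoint (disjointness follows from the growth conditions (II)--(IV), which depend only on forward orbits, much as in your Krylov--Bogolyubov remark), and $\varphi(K_2)=K_2$, $\varphi(O)=O$ then force $\varphi^{(-1)}(K_1)\subseteq K_1$. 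The real problem is that the two conclusions you leave as ``the main difficulty'' --- openness of $Q$ in (e) and the injective neighborhood in (h) --- are exactly where hypothesis (4) must be used, and the mechanisms you propose for them do not work. Openness of $Q$ is not a topological consequence of openness of $\varphi$ (lower semicontinuity of $\varphi^{(-1)}$): the paper obtains it by taking any $k\in Q$, building from its string a $\lambda$-eigenvector of $T^{\prime\prime}$ in $C(K)^{\prime\prime}$, invoking the semi-Fredholm hypothesis together with \cite[Theorem 4.42]{AA} to produce a nonzero $f\in C(K)$ with $Tf=\lambda f$ (here invertibility of $w$ gives $\varphi(U)=U=\varphi^{(-1)}(U)$ for the cozero set $U$ of $f$), then taking a Zorn-maximal open set $V$ with these invariance properties and showing $V\cup K_1\cup\bigcup_{n\ge 0}\varphi^{(-n)}(K_2)=K$, so that $Q=V$ is open. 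Nothing in your plan replaces this functional-analytic input.

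Similarly, for (h) you propose to extract injectivity of $\varphi$ near $K_2$ from invertibility of $\mathbf{T}$ on $C(\mathbf{K}_2)$ plus openness of $\varphi$; this cannot suffice, since $\varphi$ restricted to $K_2$ itself is in general only an almost homeomorphism, and properties of $\mathbf{T}$ on $\mathbf{K}_2$ say nothing about identifications $\varphi(p)=\varphi(q)$ for $p,q\in Q$ close to $K_2$. The paper's argument is again Fredholm-theoretic: it first shows every $\mu$ with $T^{\prime}\mu=\lambda\mu$ has $supp\,\mu\subseteq K_2$, and then argues that if every neighborhood of $K_2$ contained a pair $p\neq q$ in $Q$ with $\varphi(p)=\varphi(q)$, the construction of Lemma 5.9 of \cite{Ki3} would yield a singular sequence $\mu_n$ of discrete measures with $T^{\prime}\mu_n-\lambda\mu_n\to 0$, contradicting lower semi-Fredholmness of $\lambda I-T$ via Proposition~\ref{p2}; the forward invariance $\varphi(V\cap Q)\subset V\cap Q$ then comes from Lemma 5.10 and Corollary 5.11 of \cite{Ki3}. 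So as it stands your proposal proves the routine parts and defers precisely the steps that carry the content of the lemma, while pointing at tools (openness of $\varphi$ alone, invertibility of $\mathbf{T}$ on $C(\mathbf{K}_2)$) that cannot deliver them.
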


\begin{proof} Conditions (3) and (5) combined with Lemma~\ref{l1} and Theorem~\ref{t3} provide that $\mathbf{K}$ is the union of three disjoint nonempty $\Phi$ and $\Phi^{-1}$-invariant subsets $\mathbf{K}_1, \mathbf{K}_2$, and $\mathbf{O}$ with the properties

\begin{enumerate}[(I)]
  \item The sets $\mathbf{K}_1$ and $\mathbf{K}_2$ are closed in $\mathbf{K}$.
  \item $\mathbf{T}$ is invertible on $C(\mathbf{K}_2)$ and $\rho(\mathbf{T}^{-1}, C(\mathbf{K}_2)) < 1$.
  \item $\rho(\mathbf{T}, C(\mathbf{K}_1)) < 1$.
  \item If $\mathbf{V}_1$ and $\mathbf{V}_2$ are open neighborhoods of $\mathbf{K}_1$ and $\mathbf{K}_2$, respectively, and $\mathbf{E}$ is a closed subset of $\mathbf{O}$ than there is an $m \in \mathds{N}$ such that for any $n \geq m$ we have
      \begin{equation*}
        \Phi^n(\mathbf{E}) \subseteq \mathbf{V}_2 \; \text{and} \; \Phi^{-n}(\mathbf{E}) \subseteq \mathbf{V}_1.
      \end{equation*}
\end{enumerate} Let $p: \mathbf{K} \rightarrow K$ be the map $p(\mathbf{k})=k_0$ introduced in the proof of Lemma~\ref{l2} and let
$K_1 = p(\mathbf{K_1})$, $K_2 = p(\mathbf{K}_2)$, and $O = p(\mathbf{O})$. We proceed with proving the statements (a) - (i) of the lemma.

(a). Obviously the set $K_1$ is closed. It follows from the equalities $\Phi(\mathbf{K}_1) = \mathbf{K}_1$, $\Phi(\mathbf{O}) = \mathbf{O}$, and $\Phi(\mathbf{K}_2) = \mathbf{K}_2$ and the definition of the map $p$ that $\varphi(K_1) = K_1$, $\varphi(O) = O$, and $\varphi(K_2) = K_2$. Moreover, it follows from properties (II) - (IV) that $K_1$ does not intersect with $K_2 \cup O$ and therefore, $\varphi(K_1) = K_1 = \varphi^{-1}(K_1)$.

(b). Follows from (III) and the definition of $p$.

(c). The inequality $\rho(\mathbf{T}^{-1}, C(\mathbf{K}_2)) < 1$ follows from (II). It follows from the fact that the operator $\lambda I - T$ is lower semi-Fredholm and Theorem~\ref{t4} that $\varphi$ is an almost homeomorphism of $K_2$.

(d). It follows from Baire Category Theorem and the fact that $K_1 \cap F = \emptyset$ that there is an $n \in \mathds{N}$ such that $Int \, \varphi^{(-n)}(K_2) \neq \emptyset$. Then, because $\varphi$ is open, $Int \, K_2 \neq \emptyset$. Assume, contrary to our claim, that $Int \, K_2 \varsubsetneqq K_2$. Let $\tilde{K} = K \setminus \bigcup \limits_{n=0}^\infty \varphi^{(-n)}(Int \, K_2)$. Notice that $\tilde{K} = K_1 \cup \bigcup \limits_{n=0}^\infty \varphi^{(-n)}(E)$ where $E = K_2 \setminus Int \, K_2$. Applying again Baire category theorem we see that there is an $n \in \mathds{N}$ such that $Int_{\tilde{K}} \varphi^{(-n)}(E) \neq \emptyset$. Therefore, there is an open subset $V$ of $K$ such that $V \cap \varphi^{(-n)} \subset \varphi^{(-n)}$. Then $\varphi^n(V) \cap E \subset E \neq \emptyset$. The set $Int \, K_2 \cup \varphi^n(V) \cap E$ is open in $K$ and $ Int \, K_2 \subsetneqq Int \, K_2 \cup (\varphi^n(V) \cap E)$, a contradiction.

(e) - (h). For brevity let $F = \bigcup \limits_{n=0}^\infty \varphi^{(-n)}(K_2)$. We assume that $Q \neq \emptyset$, otherwise the statement is trivial.
Let $k \in Q$ and  let $E = p^{-1}(\{k\})$. $E$ is a compact subset of $\mathbf{O}$ and therefore by Theorem~\ref{t3} for arbitrary open neighborhoods $\mathbf{V}_1$ and $\mathbf{V}_2$ of, respectively, $\mathbf{K}_1$ and $\mathbf{K}_2$ there is an $N \in \mathds{N}$ such that $\Phi^n(E) \subseteq \mathbf{V}_2$ and $\Phi^{-n}(E) \subseteq \mathbf{V}_1$ for any $n \geq N$. Define a function $f$ on $K$ as follows,

\begin{equation*}
f(x) =
\begin{cases} 1     , & \text{if $x = k$,}
\\
 \lambda^i/w_i(k) , & \text{if $x = \varphi^{(i)}(k), i \in \mathds{N}$,}
\\
\lambda^{-i} w_i(x) , & \text{if $x \in \varphi^{(-i)}(\{k\}), i \in \mathds{N}$,}
\\
 0, &\text{ otherwise,}
\end{cases}
\end{equation*}
It is immediate to see that $f$ is an element of $C(K)^{\prime \prime}$ and $T^{\prime \prime} f =  \lambda f$. Because we assumed that $\lambda I - T$ is a semi-Fredholm operator it follows that (see e.g.~\cite[Theorem 4.42]{AA}) $\ker(\lambda I - T) \neq \mathbf{0}$. Let $f \in C(K)$, $f \neq 0$, and $Tf = \lambda f$. Let $U = \{k \in K : \; f(k) \neq 0\}$. It follows from $Tf = \lambda f$, $\lambda \neq 0$, and $w \in C(K)^{-1}$ that
\begin{equation}\label{eq22}
  \varphi(U) = U = \varphi^{(-1)}(U) \; \text{and} \; U \cap (K_1 \cup F) = \emptyset.
\end{equation}
By Zorn's lemma there is a maximal by inclusion open subset $V$ of $K$ with properties~(\ref{eq22}). We claim that
\begin{equation}\label{eq23}
  V \cup K_1 \cup F = K.
\end{equation}
To prove it consider the compact space $G = K \setminus V$. It follows from~(\ref{eq22}) that $\varphi(G) = G$ and therefore the operator $T$ is defined on $C(G)$. Moreover, the operator $\lambda I - T$ is lower semi-Fredholm. Assume that $K_1 \cup F \varsubsetneqq G$. Then our previous reasoning shows that there is $g \in C(G)$ such that $g \neq 0$ and $Tg = \lambda g$. Let $H = \{k \in G : g(k) \neq 0\}$. Then the set $V \cup H$ is open in $K$ and satisfies~(\ref{eq22}) in contradiction with the maximality of $V$.

We claim that for any compact subset $W$ of $V$ and for any open neighborhoods $U_1$ and $U_2$ of $K_1$ and $K_2$, respectively, there is an $N \in \mathds{N}$ such that for any $n > N$ we have
\begin{equation}\label{eq24}
  \varphi^n(W) \subset U_2 \; \text{and} \; \varphi^{(-n)}(W) \subset U_1.
\end{equation}
Indeed,~(\ref{eq24}) follows from the fact that $p^{(-1)}(W)$ is a compact subset of $\mathbf{O}$.

Notice that it follows from~(\ref{eq24}) that $cl V \cap K_2 \neq \emptyset$.

Next we claim that there is an open neighborhood  $Q$ of $K_2$ in $K$ such that $\varphi(Q \cap V) \subseteq Q \cap V$ and the restriction of $\varphi$ on $Q \cap V$ is a homeomorphism of $Q \cap V$ onto its image. First notice that $\ker{(\lambda I - T')} = \ker{((\lambda I - T'), C(K_2))} $. Indeed, let $\mu \in C(K)'$ be such that $T"\mu = \mu$. Let $U$ be an open neighborhood of $K_2$ and let $f \in C(K)$ be such that $supp \, f \subseteq K \setminus U$. Then $\int f d\mu = \lambda^{-n} \int T^n fd\mu \mathop \rightarrow \limits_{n \to \infty} 0$, because $supp \, T^n f \subseteq K \setminus \varphi^{(-n)}(U)$, $\bigcap \limits_{n=1}^\infty (K \setminus \varphi^{(-n)}(U)) = K_1$, and $\rho(T, C(K_1)) < |\lambda|$. It follows that $supp \, \mu \subseteq K_2$.

Assume, contrary to our claim, that for any open neighborhood $Q$ of $K_2$ there are $p, q \in Q \cap V$ such that $\varphi(p) = \varphi(q)$. The proof of Lemma 5.9 in~\cite{Ki3} shows that there is a sequence $\mu_n \in (C(K)'$ such that $\|\mu_n\| = 1$, $T'\mu_n - \lambda \mu_n \rightarrow 0$, and for each $n$ the measure $\mu_n$ is a finite linear combination of point measures $\delta_{s_i}$ where $S_i \in V$. Because the operator $\lambda I - T'$ is lower semi-Fredholm the sequence $\mu_n$ must contain a norm convergent subsequence. Let the limit of such a subsequence be $\nu$ then $T'\nu = \nu$ and by our previous step $supp \, \nu \subseteq K_2$. That means that for every $n$ we have $|\nu| \wedge |\mu_n| =0$, a contradiction.

Thus, there is an open neighborhood $Q$ of $K_2$ such that the restriction of $\varphi$ on $Q \cap V$ is one-to-one. The proofs of Lemma 5.10 and Corollary 5.11 in~\cite{Ki3} show that we can choose $Q$ in such a way that $\varphi(V \cap Q) \subset V \cap Q$.
\end{proof}

\bigskip

We will now consider what happens when the operator $\lambda I - T$ is lower semi-Fredholm and
$\lambda \in \sigma_{a.p.}(\mathbf{T}^\prime)$. In this case by Theorem~\ref{t2} there is $\mathbf{k} \in \mathbf{K}$ such that
\begin{equation}\label{eq25}
|\mathbf{w}_n(\mathbf{k})| \leq |\lambda|^n, \; |\mathbf{w}_n(\Phi^{-n}(\mathbf{k}))| \geq |\lambda|^n, n \in \mathds{N}.
   \end{equation}

   \begin{lemma} \label{l4} Assume conditions of Lemma~\ref{l3}. Let the operator $\lambda I - T$ be lower semi-Fredholm and
   $\lambda \in \sigma_{a.p.}(\mathbf{T}^\prime)$. Let $\mathbf{S}$ be the set of $\Phi$-strings defined as
   \begin{multline*}
     \mathbf{S} =\{\mathbf{s} : \mathbf{s} =\{\Phi^n (\mathbf{k}): n \in \mathds{Z}\} \\
     |\mathbf{w}_n(\mathbf{k})| \leq |\lambda|^n, \; |\mathbf{w}_n(\Phi^{-n}(\mathbf{k}))| \geq |\lambda|^n, n \in \mathds{N} \}.
        \end{multline*}

        Then the set $\mathbf{S}$ is finite and for every $\mathbf{s} =\{\Phi^n (\mathbf{k}): n \in \mathds{Z}\} \in \mathbf{S}$ the point $\mathbf{k}$ is isolated in $\mathbf{K}$.
   \end{lemma}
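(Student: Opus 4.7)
The plan is to derive both conclusions by contradiction using Proposition~\ref{p2}: in each case I aim to construct a norm-one singular sequence $\nu_n\in C(K)'$ with $(\lambda I - T')\nu_n\to 0$, contradicting the lower-semi-Fredholm hypothesis on $\lambda I - T$. The common building block is the following. Given $\mathbf{k}=(k_i)_{i\in\mathds{Z}}\in\mathbf{K}$ satisfying both inequalities of the $\mathbf{S}$-definition, set $c_0=1$, $c_i=w_i(k_0)/\lambda^i$ for $i\ge 1$ and $c_{-i}=\lambda^i/w_i(k_{-i})$ for $i\ge 1$; these inequalities force $|c_i|\le 1$ for every $i\in\mathds{Z}$. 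Set $\mu^N_{\mathbf{k}}=\sum_{i=-N}^{N}c_i\delta_{k_i}\in C(K)'$. Using $T'\delta_x=w(x)\delta_{\varphi(x)}$ and the recursion $c_{i-1}w(k_{i-1})=\lambda c_i$, a direct computation gives
\[
(\lambda I - T')\mu^N_{\mathbf{k}} \;=\; \lambda c_{-N}\delta_{k_{-N}}-\lambda c_{N+1}\delta_{k_{N+1}},
\]
whose norm is at most $2|\lambda|$ uniformly in $N$. Corollary~\ref{c2} ensures that for generic $\mathbf{k}$ the points $k_i$ are pairwise distinct, so $\|\mu^N_{\mathbf{k}}\|=\sum_{i=-N}^{N}|c_i|$.

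For the finiteness of $\mathbf{S}$, I assume there are infinitely many orbits and pick one base $\mathbf{k}^{(j)}$ per orbit. The hypothesis $|\lambda|>\rho_{min}(T)$ together with saturation of the forward inequality (which yields $\tfrac1n\log|w_n(k_0^{(j)})|\to\log|\lambda|$) produces some $N_j\to\infty$ with $\|\mu^{N_j}_{\mathbf{k}^{(j)}}\|\to\infty$; set $\nu_j=\mu^{N_j}_{\mathbf{k}^{(j)}}/\|\mu^{N_j}_{\mathbf{k}^{(j)}}\|$, so $(\lambda I - T')\nu_j\to 0$. After thinning I arrange that the positive parts $|\nu_j|$ are pairwise lattice-disjoint in $C(K)'$ --- the band argument being exactly the one in the last paragraph of the proof of Lemma~\ref{l3} --- which forces $\{\nu_j\}$ to be singular and contradicts Proposition~\ref{p2}. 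For the isolation of the distinguished base $\mathbf{k}$ of $\mathbf{s}\in\mathbf{S}$, I suppose $\mathbf{k}$ is not isolated in $\mathbf{K}$ and pick $\mathbf{k}^{(m)}\to\mathbf{k}$ with $\mathbf{k}^{(m)}\ne\mathbf{k}$; non-periodicity of $\mathbf{k}$ lets me thin so that the orbits of the $\mathbf{k}^{(m)}$ are pairwise disjoint and disjoint from the orbit of $\mathbf{k}$. Closedness of the inequalities and continuity of $|\mathbf{w}_n|$ imply that each $\mathbf{k}^{(m)}$ satisfies the finite-$N$ truncation of the inequalities up to a tolerance $\varepsilon_m(N)\to 0$; the same truncation construction, normalization, and lattice-disjointness argument then reproduce the contradiction.

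The main obstacle is reconciling $\|\mu^{N_j}_{\mathbf{k}^{(j)}}\|\to\infty$ with lattice disjointness of the normalized $\nu_j$. Norm blow-up requires an asymptotic saturation of the forward inequality, which follows from combining it with an averaging argument along the $\Phi$-orbit and the variational formula~(\ref{eq12}) for $\rho_{min}$. Lattice disjointness is delicate because the projection $p:\mathbf{K}\to K$ can collapse distinct $\Phi$-orbits onto overlapping supports in $K$; the correct argument must work inside the lattice structure of $C(K)'$, in the same spirit as the band decomposition used in the last paragraph of the proof of Lemma~\ref{l3} and in Lemmas~5.9--5.11 of~\cite{Ki3}.
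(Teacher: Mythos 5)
Your reduction to Proposition~\ref{p2} and the algebraic identity $(\lambda I-T')\mu^N_{\mathbf{k}}=\lambda c_{-N}\delta_{k_{-N}}-\lambda c_{N+1}\delta_{k_{N+1}}$ are fine, but the engine of your argument --- the claim that $\|\mu^{N_j}_{\mathbf{k}^{(j)}}\|\to\infty$ because the forward inequality is ``saturated,'' i.e.\ $\tfrac1n\log|w_n(k_0)|\to\log|\lambda|$ --- is a genuine gap. The averaging argument along the orbit combined with formula~(\ref{eq12}) only shows that every limit point of $\tfrac1n\log|w_n(k_0)|$ is $\geq\log\rho_{min}(T)$; together with the hypothesis it is squeezed between $\log\rho_{min}(T)$ and $\log|\lambda|$, nothing more. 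Even if Ces\`aro saturation held, it would not force divergence of $\sum_n|w_n(k_0)|/|\lambda|^n$ (take $|w_n(k_0)|/|\lambda|^n=e^{-\sqrt n}$). Worse, the blow-up you want is exactly what the lower semi-Fredholm hypothesis forbids: the paper's first step (its inequality~(\ref{eq26})) is to show that \emph{both} series $\sum|\lambda|^n/|w_n(k_{-n})|$ and $\sum|w_n(k_0)|/|\lambda|^n$ converge, because divergence of either one already produces, from a \emph{single} string, a singular normalized sequence contradicting Proposition~\ref{p2}. With blow-up unavailable, your mechanism for making the relative error small dies, and finiteness of $\mathbf{S}$ has to come from a different source: once~(\ref{eq26}) is established, each string carries a genuine eigenmeasure $\mu$ of $T'$ (unique up to a scalar on $p(\mathbf{s})$), and since $\lambda I-T$ is lower semi-Fredholm, $\dim\ker(\lambda I-T')=def(\lambda I-T)<\infty$, which bounds the number of strings. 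That finite-dimensionality argument is absent from your proposal.

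Two further gaps. First, your appeal to Corollary~\ref{c2} for ``generic $\mathbf{k}$'' does not let you assume the points $k_i$ are pairwise distinct: a specific string in $\mathbf{S}$ may perfectly well be eventually periodic even though the periodic strings form a first-category set, and then $\|\mu^N\|\neq\sum|c_i|$ and your recursion-based error estimate changes. The paper spends its Cases 2 and 3 on exactly these possibilities (constructing eigenmeasures via $\sum_{i=0}^{p-1}\lambda^{p-i-1}(T')^i\delta_s$, and using the first-category hypothesis only to exclude a neighborhood of uniformly bounded period). Second, the disjointness needed for singularity is asserted, not proved, and it is the delicate point: strings $\mathbf{k}^{(m)}\to\mathbf{k}$ in $\mathbf{K}$ may agree with $\mathbf{k}$ in all coordinates inside the truncation window and differ only far out, so the projected measures can essentially coincide with truncations of $\mu$ and the normalized sequence may converge rather than be singular; the band argument at the end of Lemma~\ref{l3} (support of eigenmeasures in $K_2$) does not address this. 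The paper's route is different and is what actually closes the argument: it works in $K$, assumes $k_0$ is \emph{not} isolated in $K$, and uses that together with the openness of $\varphi$ and the convergence~(\ref{eq26}) to manufacture pairwise distinct points $k_{i,m}$ forming approximate strings, whose associated measures are genuinely pairwise disjoint. So you should (i) prove~(\ref{eq26}) first, (ii) treat the eventually periodic cases separately, and (iii) replace the ``thinning'' claims by the explicit construction of disjointly supported approximate eigenmeasures in $K$.
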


   \begin{proof} Let $\mathbf{k}$ be a point in $\mathbf{K}$ such that inequalities~(\ref{eq25}) are satisfied. Let $\{k_n : n \in \mathds{Z}\}$ be the corresponding $\varphi$-string. We have to consider several cases.

   Case 1. The set $\{k_n : n \in \mathds{N}\}$ is infinite, i.e. the point $k_0$ is not eventually periodic. We claim that
   \begin{equation}\label{eq26}
     \sum \limits_{n=1}^\infty \frac{|\lambda|^n}{|w_n(k_{-n})|}  + \sum \limits_{n=0}^\infty \frac{|w_n(k_0)|}{|\lambda|^n} < \infty.
   \end{equation}
    To prove~(\ref{eq26}) assume first to the contrary that
    \begin{equation}\label{eq27}
      \sum \limits_{n=1}^\infty \frac{|\lambda|^n}{|w_n(k_{-n})|} = \infty.
    \end{equation}
    For every $m \in \mathds{N}$ consider the discrete measure $\mu_m = \sum \limits_{n = -m}^0 \frac{\lambda^n}{w_n(k_{n})} \delta_{k_n}$. Then
\begin{equation}\label{eq28}
  \|\mu_m\| = \sum \limits_{n=0}^m \frac{|\lambda|^n}{|w_n(k_{-n})|} \mathop \rightarrow \limits_{m \rightarrow \infty} \infty
\end{equation}
and
\begin{equation}\label{eq29}
  \|T'\mu_m - \lambda \mu_m\| = \|\frac{\lambda^m}{w_m(k_{-m})}\delta_{k_{-m}} + w(k_0)\delta_{k_1}\| \leq 1 + \|w\|_{C(K)}.
\end{equation}
Let $\nu_m = \mu_m/\|\mu_m\|$ It follows from~(\ref{eq28}) and~(\ref{eq29}) that we can find positive integers $m_l$ such that $\|T'\nu_{m_l} - \lambda \nu_{m_l}\| \mathop \rightarrow  \limits_{l \rightarrow \infty} 0$ and the sequence $\nu_{m_l}$ is singular, in contradiction with our assumption that the operator $\lambda I - T$ is lower semi-Fredholm.

Similarly we can prove that
\begin{equation*}
 \sum \limits_{n=0}^\infty \frac{|w_n(k_0)|}{|\lambda|^n} < \infty.
\end{equation*}
Let
\begin{equation*}
  \mu = \sum \limits_{n=1}^\infty \frac{\lambda^n}{w_n(k_{-n})}\delta_{k_{-n}} + \sum \limits_{n=0}^\infty \frac{w_n(k_0)}{\lambda^n} \delta_{k_n}.
  \end{equation*}
Then $T^\prime \mu = \lambda \mu$. We claim that the point $k_0$ is isolated in $K$. Indeed, otherwise using~(\ref{eq26}) and the fact that the map $\varphi$ is open we can find points $k_{i,m}, m \in \mathds{N}, -m \leq i \leq m$ such that
\begin{enumerate}[(a)]
  \item $\varphi(k_{i,m} = k_{i+1,m}, m \in \mathds{N}, -m \leq i \leq m-1$.
  \item The points $k_{i,m}$ are pairwise distinct.
  \item $\mu_m - \mu \mathop \rightarrow \limits_{m \rightarrow \infty} 0$, where
 \begin{equation*}
  \mu_m = \sum \limits_{i=-m}^{-1} \frac{\lambda^{|i|}}{w_i(k_{m,i})}\delta_{k_{m,i}} + \sum \limits_{i=0}^m \frac{w_i(k_{m,i})}{\lambda^i} \delta_{k_{m,i}}.
  \end{equation*}
  Let $\nu_m = \mu_m/\|\mu_m\|$. It follows from (a) - (c) that $T^\prime \nu_m - \lambda \nu_m \rightarrow 0$ and the sequence $\nu_m$ is singular in contradiction with $\lambda I - T$ assumed to be lower semi-Fredholm.
\end{enumerate}

Case 2. The set $\{k_n : n \in \mathds{Z}\}$ is infinite but the set $\{k_n : n \in \mathds{N}\}$ is finite. In other words, there is $s \in \mathds{Z}$ such that the point $k_s$ is $\varphi$-periodic, but the point $k_{s-1}$ is not. Let $p$ be the period of the point $k_s$. There are two possibilities.

(1) $w_p(k_s) = \lambda^p$. Let
  \begin{equation*}
    \mu = \sum \limits_{i=0}^{p-1} \lambda^{p-i-1}(T^\prime)^i\delta_s.
  \end{equation*}
  Then $T^\prime \mu = \lambda \mu$.

  (2) $w_p(k_s) \neq \lambda^p$. Like in case 1 we can prove that

\begin{equation*}
      \sum \limits_{n=1}^\infty \frac{|\lambda|^n}{|w_n(k_{s-n})|} < \infty.
    \end{equation*}
    Let
 \begin{equation*}
    \mu = (\lambda^p - w_p(k_s)  \sum \limits_{n=1}^\infty \frac{\lambda^n}{|w_n(k_{s-n})|} \delta_{k_{s-n}}
    + \sum \limits_{i=0}^{p-1} \lambda^{p-i-1}(T^\prime)^i\delta_s.
    \end{equation*}
    Then $T^\prime \mu = \lambda \mu$.

    In both cases we can prove that the point $k_s$ is isolated in $K$ using the same reasoning as in Case 1.

    Case 3. The set $\{k_n : n \in \mathds{Z}\}`$ is finite.  Assume that the point $k_0$ is not isolated in $K$. Then there are the following possibilities.

    1. For every open neighborhood $V$ of $k_0$ and for every $n \in \mathds{N}$ the neighborhood $V$ contains either a point that is not $\varphi$-periodic or a $\varphi$-periodic point of period at least $n$. In this case we can construct a singular sequence $\mu_n$ such that $T^\prime \mu_n - \lambda \mu_n \rightarrow 0$.

    Indeed we can find points $t_{n,i} \in K, n \in \mathds{N} -n \leq i \leq n$ such that
\begin{enumerate}[(I)]
    \item $\varphi(t_{n,i}) = t_{n,i+1}, -n \leq i < n$,
  \item The points $t_{n,i}, n \in \mathds{N}, -n \leq i \leq n$ are pairwise distinct,
  \item $|w_j(t_{n,0})| \leq 2|\lambda|^n$ and $|w_j(t_{n, -j})| \geq 1/2 |\lambda|^n, , j=1, \ldots, n$.
\end{enumerate}
Let
\begin{equation} \label{eq30}
  \mu_n = \sum \limits_{j=0}^{n-1} \big{(} 1 - \frac{1}{\sqrt{n}}\big{)}^j
  \lambda^{-j} w_j(t_{j,0}) \delta_{t_{j,n}} + \sum \limits_{j=1}^{n-1}\big{(} 1 - \frac{\lambda^j}{\sqrt{n}}\big{)}^j \frac{1}{w_j(t_{n, -j})} \delta_{t_{n, -j}}.
\end{equation}
It is not difficult to see from (III) and~(\ref{eq30}) that
\begin{equation}\label{eq31}
  \|T'\mu_n - \lambda \mu_n\| = o(\|\mu_n\|), n \rightarrow \infty.
\end{equation}
But the measures $\mu_n$ are pairwise disjoint in virtue of II and therefore the sequence $\nu_n = \frac{\mu_n}{\|\mu_n\|}$ is singular, a contradiction.

    2. There are an open neighborhood $V$ of $k_0$ and an $n \in \mathds{N}$ such that every point in $V$ is $\varphi$-periodic and has period less or equal to $n$. That obviously contradicts our assumption that the set of eventually $\varphi$-periodic points is of first category in $K$.

    Thus it follows from our assumption that the set of all eventually $\varphi$-periodic points is of first category that only the first case is possible. Moreover, because in this case to each $\mathbf{s \in \mathbf{S}}$ there is a unique up to a constant factor discrete measure $\mu$ on the set $p(\mathbf{s})$ such that $T^\prime \mu = \lambda \mu$, we see that the set $\mathbf{S}$ is finite.
   \end{proof}

   \begin{corollary} \label{c1} Assume conditions of Lemma~\ref{l3}. Let the operator $\lambda I - T$ be lower semi-Fredholm and let the set $\mathbf{S}$ be not empty. Then there is a countable open subset $S$ of $K$ such that $S$ is the union of finite number of strings, $\varphi(K \setminus S) = K \setminus S$ and the operator $\lambda I - T$ is lower semi-Fredholm on $C(K \setminus S)$.
      \end{corollary}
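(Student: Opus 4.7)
By Lemma~\ref{l4}, $\mathbf{S}$ is a finite set, say $\mathbf{S} = \{\mathbf{s}_1,\ldots,\mathbf{s}_N\}$, with underlying $\varphi$-strings $(k_n^{(j)})_{n\in\mathds{Z}}$ in $K$; Lemma~\ref{l4} also gives that $k_0^{(j)}$ is isolated in $K$ for each $j$. The natural candidate is
\[
S \;=\; \bigcup_{j=1}^{N}\{k_n^{(j)}: n\in\mathds{Z}\},
\]
a finite union of $\varphi$-strings. It is immediate that $S$ is at most countable and that $\varphi(S)\subseteq S$. The plan is then to verify, in order, (i) $S$ is open in $K$, (ii) $\varphi^{(-1)}(S)\subseteq S$ (which, together with the surjectivity of $\varphi$, yields $\varphi(K\setminus S) = K\setminus S$), and (iii) $\lambda I - T$ restricts to a lower semi-Fredholm operator on $C(K\setminus S)$.

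For (i), $\varphi$ being open propagates isolation forward: $\{k_n^{(j)}\} = \varphi^n(\{k_0^{(j)}\})$ is open for every $n\geq 0$. Isolation of the backward points $k_{-n}^{(j)}$ is proved by induction on $n$, rerunning the discrete-measure argument of the proof of Lemma~\ref{l4} but anchored at $k_{-n}^{(j)}$ rather than at $k_0$: a net $y_\alpha\to k_{-n}^{(j)}$ with $y_\alpha\neq k_{-n}^{(j)}$ eventually lies in the clopen set $\varphi^{(-1)}(\{k_{-n+1}^{(j)}\})$, and extending each $y_\alpha$ forward by iteration of $\varphi$ and backward by surjectivity, one forms weighted sums of Diracs along these perturbed $\varphi$-strings whose coefficients approximate those of the eigenvector $\nu^{(j)}\in\ker(\lambda I - T')$ realized along $p(\mathbf{s}_j)$. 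This yields a singular sequence $\nu_m$ with $T'\nu_m - \lambda\nu_m \to 0$, contradicting Proposition~\ref{p2}.

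For (ii), suppose $k\in\varphi^{(-1)}(\{k_n^{(j)}\})$ with $k\neq k_{n-1}^{(j)}$. Extend $k$ to a bi-infinite $\varphi$-string $(y_m)$ with $y_m = k_m^{(j)}$ for $m\geq n$ and $y_{n-1} = k$, choosing $y_m$ for $m<n-1$ by surjectivity. The forward inequality defining $\mathbf{S}$ carries over automatically to this orbit. If the backward inequality also holds for some reference point in the orbit, then the orbit belongs to $\mathbf{S}$ and hence $k\in S$; if it fails, the same type of perturbed-measure construction as in step~(i), now incorporating a Dirac mass at $k$ alongside a rescaled copy of $\nu^{(j)}$, produces a singular approximate eigensequence for $T'$ at $\lambda$, again contradicting lower semi-Fredholmness. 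Finally, (iii) follows from the observation that extension by zero embeds $C(K\setminus S)'$ isometrically into $C(K)'$ and intertwines the two dual operators (using $\varphi(K\setminus S)=K\setminus S$), so any singular sequence witnessing $\lambda\in\sigma_{lsf}(T,C(K\setminus S))$ would lift to one on $C(K)$, contradicting the hypothesis.

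The main obstacle lies in step~(ii): one must verify that every alternate preimage of a $k_n^{(j)}$ is already accounted for in $S$. The forward/backward asymmetry of the inequalities defining $\mathbf{S}$ prevents a clean ``shift of reference point'' argument, so the proof proceeds branch-by-branch, alternating between recognizing new orbits in $\mathbf{S}$ and invoking the singular-sequence machinery from Lemma~\ref{l4}.
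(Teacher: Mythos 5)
Your candidate set is exactly the paper's: its entire proof is ``take $S=p(\mathbf{S})$ and apply Lemma~\ref{l4}''. So your steps (i) and (iii) (isolation of all string points, hence openness of $S$, and the lifting of singular sequences from $C(K\setminus S)^\prime$ to $C(K)^\prime$ via extension by zero) are in the spirit of what the paper intends, even though Lemma~\ref{l4} literally establishes isolation only of the anchor point, so your backward-induction sketch for the points $k_{-n}^{(j)}$ is a genuine (if plausible) addition rather than a quotation.

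The genuine gap is your step (ii), which you flag as the main obstacle but do not close. To get $\varphi(K\setminus S)=K\setminus S$ you need $\varphi^{(-1)}(S)\subseteq S$, i.e.\ that no off-string preimage $k$ of some $k_n^{(j)}$ exists outside $S$. Your dichotomy is: either the bi-infinite orbit through $k$ satisfies the inequalities defining $\mathbf{S}$ (and then $k\in S$), or it does not, in which case ``the same type of perturbed-measure construction'' yields a singular approximate eigensequence for $T^\prime$ at $\lambda$. The second horn is unsupported. In the proof of Lemma~\ref{l4} singular sequences come from two specific sources: divergence of the series in~(\ref{eq27}), or non-isolation of the anchor point, which supplies infinitely many pairwise distinct nearby strings along which the eigenmeasure can be approximated. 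Mere failure of the inequalities $|w_m(y_{-m})|\geq|\lambda|^m$ along the particular backward branch through $k$ implies neither of these; an orbit that fails the inequalities is simply not in $\mathbf{S}$, and that is not by itself in conflict with $\lambda I-T$ being lower semi-Fredholm. Moreover, when $n\geq 1$ the new orbit does not contain $k_0^{(j)}$ at all, so even the claim that ``the forward inequality carries over automatically'' requires choosing a new anchor --- precisely the reference-shift problem you acknowledge and leave unresolved. As written, the invariance of $K\setminus S$ is therefore not proved. A more natural mechanism, which the paper leaves implicit, is structural rather than measure-theoretic: in the situation of Lemma~\ref{l3} and Theorem~\ref{t6}(IV) the complementary pieces $K_1$, $\bigcup\limits_{n=0}^\infty\varphi^{(-n)}(K_2)$ and $Q$ are each mapped into themselves by $\varphi$ and are disjoint from $S$, which gives $\varphi^{(-1)}(S)\subseteq S$ with no new singular-sequence argument; some argument of this kind (or an actual construction replacing your second horn) has to be supplied for the corollary to be proved.
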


      \begin{proof} We take $S = p(\mathbf{S})$ and apply Lemma~\ref{l4}.
      \end{proof}

      Finally we can state our main result.

      \begin{theorem} \label{t6} Let $K$ be a compact Hausdorff space and $\varphi$ be an open continuous non-invertible map of $K$ onto itself. Let $w$ be an invertible element of the algebra $C(K)$. Assume that the set of all eventually $\varphi$-periodic points is of first category in $K.$ Let
      \begin{equation*}
        (Tf)(k) = w(k)f(\varphi(k)), f \in C(K), k \in K.
      \end{equation*}
        Let $\lambda \in \mathds{C}$ be such that $\lambda \in \sigma(T)$ and $|\lambda| > \rho_{min}(T)$.

        \noindent The following conditions are equivalent.

        \noindent (1) The operator $\lambda I - T$ is lower semi-Fredholm.

        \noindent (2) One of the following conditions is satisfied.
        \begin{enumerate}[(I)]
          \item The compact space $K$ is the union of two disjoint $\varphi$-invariant clopen subsets $K_1$ and $K_2$ such that $\rho(T,C(K_1)) < |\lambda|$, $\rho_{min}(T,C(K_2)) > |\lambda|$, and $\varphi$ is an almost homeomorphism but not a homeomorphism of $K_2$ onto itself.
          \item There are closed subsets $K_1$ and $K_2$ of $K$ such that $\varphi^{(-1)}(K_1) = \varphi(K_1)=K_1$, $\rho(T,C(K_1) < |\lambda|$, $\varphi(K_2) = K_2$, $\rho_{min}(T,C(K_2)) > |\lambda|$, and $\varphi$ is an almost homeomorphism but not a homeomorphism of $K_2$ onto itself.

              Moreover, the set $K_2$ is a clopen subset of $K$, $K_2 \subsetneqq \varphi^{(-1)}(K_2)$, and
              \begin{equation*}
                K = K_1 \cup \bigcup \limits_{n=0}^\infty \varphi^{(-n)}(K_2).
              \end{equation*}
          \item  \begin{equation*}
    K = K_1 \cup \bigcup \limits_{n=0}^\infty \varphi^{(-n)}(K_2) \cup Q,
  \end{equation*}
  where the sets $K_1$, $K_2$, and $Q$ have properties described in the statement of Lemma~\ref{l3}.
          \item  \begin{equation*}
    K = K_1 \cup \bigcup \limits_{n=0}^\infty \varphi^{(-n)}(K_2) \cup Q \cup S,
  \end{equation*}
  where the sets $K_1$, $K_2$, and $Q$ have properties described in the statement of Lemma~\ref{l3} and $S$ is an open countable subset of $K$ disjoint with the set $K_1 \cup \bigcup \limits_{n=0}^\infty \varphi^{(-n)}(K_2) \cup Q$. Moreover, the set $S$ is the union of finite number of $\varphi$-strings  and for every $s \in S$ we have
  \begin{equation*}
    |w_n(s_0)| \leq |\lambda|^n, \; |w_n(s_{-n})| \geq |\lambda|^n, n \in \mathds{N}
  \end{equation*}
          \end{enumerate}
      \end{theorem}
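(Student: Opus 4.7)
The plan is to prove the equivalence by splitting the harder direction $(1) \Rightarrow (2)$ into three cases determined by how $\lambda$ relates to the spectrum of the associated operator $\mathbf{T} = \mathbf{w}T_\Phi$ on $C(\mathbf{K})$, and to handle $(2) \Rightarrow (1)$ by a direct parametrix argument in each of the four structural cases. The reductions rely on Lemmas~\ref{l2}, \ref{l3}, and~\ref{l4} together with Corollary~\ref{c1}, applied after lifting to the $\Phi$-string space $\mathbf{K}$ on which Theorems~\ref{t2} and~\ref{t3} from the homeomorphism setting become available.

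For $(1) \Rightarrow (2)$ I would first dispose of the case $\lambda \notin \sigma(\mathbf{T})$: Lemma~\ref{l2} provides closed disjoint $\varphi$-invariant sets $K_1, K_2 \subset K$ with $\rho(T, C(K_1)) < |\lambda| < \rho_{\min}(T, C(K_2))$, and the clopenness of their lifts in $\mathbf{K}$ transfers under the projection $p$ to show that $K = K_1 \cup K_2$ is a clopen decomposition, yielding case (I). When $\lambda \in \sigma(\mathbf{T})$ I further split on whether the range of $\lambda I - \mathbf{T}$ fills $C(\mathbf{K})$. If it does, Lemma~\ref{l3} produces the decomposition $K = K_1 \cup \bigcup_n \varphi^{(-n)}(K_2) \cup Q$; when $Q = \emptyset$ I would verify clopenness of $K_2$ and the strict inclusion $K_2 \subsetneqq \varphi^{(-1)}(K_2)$ (forced by non-injectivity of $\varphi$ on the preimage tower) to land in case (II), and otherwise I land in case (III). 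Finally, when $\lambda \in \sigma_{a.p.}(\mathbf{T}')$, i.e.\ $(\lambda I - \mathbf{T})C(\mathbf{K}) \neq C(\mathbf{K})$, I would invoke Lemma~\ref{l4} and Corollary~\ref{c1} to excise the countable open union $S$ of finitely many $\varphi$-strings satisfying~(\ref{eq25}); on $K \setminus S$ the operator remains lower semi-Fredholm and no string satisfying those estimates survives, so Lemma~\ref{l3} applies to $K \setminus S$ and delivers the remaining $K_1, K_2, Q$, giving case (IV).

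For $(2) \Rightarrow (1)$ I would check each of (I)--(IV) in turn. Case (I) is immediate: $\lambda I - T$ decomposes across the clopen pieces, is invertible on $C(K_1)$ by the spectral radius estimate, and is Fredholm on $C(K_2)$ by Theorem~\ref{t5}. In cases (II) and (III) I would use the decomposition of $K$ to build a parametrix for $\lambda I - T$: on $K_1$ a Neumann series $\sum \lambda^{-n-1} T^n$ converges, on the invertible branch surrounding $K_2$ the analogous backward series $\sum \lambda^{n-1} T^{-n}$ converges, and the two patch along the interface $Q$ exactly as in the proof of Theorem~\ref{t3}. Case (IV) combines the parametrix from (III) applied to $K \setminus S$ with a finite-rank correction accounting for the eigenfunctionals of $T'$ supported on $S$, which by Lemma~\ref{l4} span a finite-dimensional space. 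The main obstacle will be the passage between the string space $\mathbf{K}$ and $K$ in the non-injective setting: the map $p : \mathbf{K} \to K$ is generally many-to-one over $Q$, and care is needed to verify that the one-to-one neighborhood property of $\varphi$ near $K_2$ established inside Lemma~\ref{l3} descends from $\mathbf{K}$ to $K$ after $S$ is excised, and that the fiber count for the almost-homeomorphism $\varphi|_{K_2}$ remains finite through the preimage tower so that the defect does not blow up.
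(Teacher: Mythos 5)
Your direction $(1) \Rightarrow (2)$ is organized exactly as the paper intends: the trichotomy $\lambda \notin \sigma(\mathbf{T})$, $\lambda \in \sigma(\mathbf{T})$ with $(\lambda I - \mathbf{T})C(\mathbf{K}) = C(\mathbf{K})$, and $\lambda \in \sigma_{a.p.}(\mathbf{T}')$, handled respectively by Lemma~\ref{l2}, Lemma~\ref{l3}, and Lemma~\ref{l4} together with Corollary~\ref{c1}; the paper's proof of this implication is precisely the citation of these lemmas, so that half is fine.

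The converse direction, however, has a genuine gap in cases (II)--(IV). Your parametrix is built from ``the analogous backward series $\sum \lambda^{n-1} T^{-n}$'' on a branch surrounding $K_2$, but no such series exists: in these cases $\varphi$ is an almost homeomorphism but \emph{not} a homeomorphism of $K_2$, so $T$ is not invertible on $C(K_2)$ (nor on any invariant neighborhood of it); the hypothesis $\rho_{min}(T, C(K_2)) > |\lambda|$ only places $\lambda$ in $\sigma_r(T, C(K_2))$ (cf.\ the proof of Theorem~\ref{t4}), leaving a nonzero finite defect created by the points where $\varphi|_{K_2}$ is not injective --- the very quantity counted in Corollary~\ref{c7}. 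The patching ``as in the proof of Theorem~\ref{t3}'' is also not available as stated, since Theorem~\ref{t3} concerns homeomorphisms, and a two-sided parametrix modulo compacts would prove Fredholmness, which is false in general under (III) (Theorem~\ref{t7} shows Fredholmness needs $Q$ to consist of isolated points forming finitely many orbits). The paper instead argues on the dual side: for (II) it shows $\lambda \in \sigma_r(T)$ (so the range is closed), localizes every solution of $T'\mu = \lambda\mu$ to $K_2$ via the argument of Theorem 5.14 of~\cite{Ki3}, and bounds the defect by the finite fiber count of the almost homeomorphism; for (III) and (IV) it assumes a singular sequence $\mu_n$ with $T'\mu_n - \lambda\mu_n \to 0$ (Proposition~\ref{p2}), pushes its support into $Q$ (resp.\ $S$), and passes to the second dual $C(K)'' \cong C(\mathds{Q})$ with $\mathds{Q}$ extremally disconnected so that the map becomes injective on the relevant closure, reaching a contradiction via Theorems 5.14 and 2.11 of~\cite{Ki3}. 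Your closing remark about the passage $p : \mathbf{K} \to K$ points at the wrong place: in the converse direction the string space plays no role, and the real work is this dual-space and hyperstonian-cover argument, which your proposal does not supply.
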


      \begin{proof} The implication $(1) \Rightarrow (2)$ follows from Lemmas~\ref{l2} -~\ref{l4}.

      To prove the implication $(2) \Rightarrow (1)$ we will prove that every of conditions (I) - (IV) implies (2).

      \noindent $(I) \Rightarrow (2)$. This implication follows from Theorem~\ref{t4}.

      \noindent $(II) \Rightarrow (2)$. First we notice that (II) implies that $\lambda \in \sigma_r(T)$, and therefore the image $(\lambda I - T)C(K)$ is closed in $C(K)$. Indeed, assume to the contrary that $\lambda \in \sigma_{a.p.}(T)$. Then (see~\cite{Ki1}) there is $k \in K$ such that
      \begin{equation}\label{eq32}
        |w_n(k)| \geq |\lambda|^n \; \text{and} \; \forall t \in \varphi^{(-n)}(k), |w_n(t)| \leq |\lambda|^n, n \in \mathds{N}.
      \end{equation}
      But it is immediate to see that the existence of a point $k$ satisfying~(\ref{eq32}) contradicts condition (II).

      Assume that $\mu \in C(K)^\prime$, $\mu \neq 0$, and $T^\prime \mu = \lambda \mu$. The proof of Theorem 5.14 in~\cite{Ki3} shows that $supp \; \mu \subseteq K_2$. Because $\varphi$ is an almost homeomorphism of $K_2$ we see that $def(\lambda I - T) < \infty$.

      \noindent $(III) \Rightarrow (2)$. Assume (III) and assume to the contrary that there is a singular sequence $\mu_n \in C(K)^\prime$ such that $T^\prime \mu_n - \lambda \mu_n \rightarrow 0$. From the previous step we conclude that without loss of generality we can assume that $supp \; \mu_n \subset Q$. Assume first that the compact space $K$ is extremally disconnected. Then the restriction of $\varphi$ on $cl V$ is one-to one. Then we come to contradiction as in the proof of Theorem 5.14 in~\cite{Ki3}.

      If $K$ is not extremally disconnected we can consider the operator $T^{\prime \prime}$ on the second dual $C(K)^{\prime \prime} \cong C(\mathds{Q})$ where the compact space $\mathds{Q}$ is extremally disconnected. Notice that $T^{\prime \prime} = w^{\prime \prime} T_\psi$ where $\psi$ is a continuous map of $\mathds{Q}$ onto itself. It is not difficult to see that $\psi$ satisfies condition (III).

      \noindent $(IV) \Rightarrow (2)$. Assume (IV) and assume to the contrary that there is a singular sequence $\mu_n \in C(K)^\prime$ such that $T^\prime \mu_n - \lambda \mu_n \rightarrow 0$. Without loss of generality we can assume that $supp \mu_n \subseteq S$ Let $S^{\star \star}$ be the set $j^{-1}(S)$ where $j: Q \rightarrow K$ is the surjection corresponding to the isometric embedding of $C(K)$ into C(Q). The set $S^{\star \star}$ is a finite union of $\psi$-strings and points of $S^{\star \star}$ are isolated in $Q$. The map $\psi$ extends to a homeomorphism of $cl S^{\star \star}$ onto itself. It follows from Theorem 2.11 in~\cite{Ki3} that the sequence $\mu_n$ contains a convergent subsequence, a contradiction.
      \end{proof}

       \begin{corollary} \label{c7} Assuming one of conditions (I) - (IV) from Theorem~\ref{t6} is satisfied, the defect of $\lambda I - T$ can be computed as
       \begin{equation}\label{eq33}
         def(\lambda I - T) =card( \{(p,q) : p,q \in K_2,p \neq q, \varphi (p)= \varphi(q)\}) + card( \mathbf{S}),
       \end{equation}
where $\mathbf{S}$ is the set introduced in the statement of Lemma~\ref{l4}.

\noindent In particular, $def(\lambda I - T) =0$, i.e. $(\lambda I - T)C(K)=C(K)$ if the following two conditions are satisfied
\begin{itemize}
  \item The set $\mathbf{S}$ is empty,
  \item The map $\varphi$ is a homeomorphism of $K_2$ onto itself. \footnote{See also Theorem 5.14 in~\cite{Ki3}}
\end{itemize}
      \end{corollary}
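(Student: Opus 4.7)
The plan is to compute $\mathrm{def}(\lambda I - T)$ as $\dim \ker(\lambda I - T^\prime)$ (which is legitimate because under the hypotheses of Theorem~\ref{t6} the operator $\lambda I - T$ is Fredholm, or at least lower semi-Fredholm with closed range by the arguments in the proof of $(II)\Rightarrow(2)$), and then to decompose this kernel according to the partition $K = K_1 \cup \bigcup_{n\geq 0}\varphi^{(-n)}(K_2) \cup Q \cup S$.

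First I would argue that any $\mu \in C(K)^\prime$ with $T^\prime \mu = \lambda \mu$ has $\mathrm{supp}\,\mu \subseteq K_2 \cup S$. The component on $K_1$ is ruled out by the inequality $\rho(T,C(K_1)) < |\lambda|$ iterated backwards (for any $f\in C(K)$ with $\mathrm{supp}\,f$ in a small enough neighborhood $U$ of $K_1$, $\int f\,d\mu = \lambda^{-n}\int T^n f\,d\mu \to 0$, using that $\mathrm{supp}\,T^n f \subseteq \varphi^{(-n)}(U)$ accumulates inside $K_1$). The component on $Q$ is excluded by the argument already used in the proof of $(III)\Rightarrow(2)$ of Theorem~\ref{t6}, which reduces (via extremal disconnection of $\mathbf{Q}$ and the operator $T^{\prime\prime}$) to the injectivity of $\varphi$ on a neighborhood of $K_2$ intersected with $V$, forcing any such eigenmeasure to agree with one supported on $K_2$. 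Finally, the component on $\bigcup_{n\geq 1}\varphi^{(-n)}(K_2)\setminus K_2$ gets pulled back onto $K_2$ under $T^\prime$ and so contributes nothing beyond what is counted on $K_2$.

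Once the support is constrained to $K_2 \cup S$, I would split $\mu = \mu_2 + \mu_S$ with $\mu_2$ supported on $K_2$ and $\mu_S$ on $S$, and verify that each summand is separately in the kernel (this uses that $\varphi(K_2)=K_2$ and that $S$ is $\varphi$- and $\varphi^{(-1)}$-invariant, being a union of full $\varphi$-strings). For $\mu_2$, the restriction of $T$ to $C(K_2)$ is a weighted composition operator whose symbol $\varphi|_{K_2}$ is an almost homeomorphism, so Theorem~\ref{t4} (applied on $C(K_2)$, where $|\lambda|<\rho_{\min}(T,C(K_2))$) yields $\dim\ker(\lambda I - T^\prime,C(K_2)^\prime) = \mathrm{card}\{(p,q):p,q\in K_2, p\neq q, \varphi(p)=\varphi(q)\}$. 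For $\mu_S$, Lemma~\ref{l4} provides, for each $\mathbf{s}\in\mathbf{S}$, a unique up to scalar discrete eigenmeasure on $p(\mathbf{s})$, and the $\mathrm{card}(\mathbf{S})$ such measures are linearly independent because their supports are pairwise disjoint (distinct $\Phi$-strings project to disjoint $\varphi$-strings when their terminal periodic orbit or forward tail differs; the same construction in Lemma~\ref{l4} guarantees uniqueness). Adding the two contributions gives~(\ref{eq33}).

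The ``in particular'' clause is immediate from~(\ref{eq33}): if $\mathbf{S}=\emptyset$ the second term vanishes, and if $\varphi$ is a homeomorphism of $K_2$ then no distinct $p,q\in K_2$ satisfy $\varphi(p)=\varphi(q)$, so the first term vanishes too. The main obstacle I anticipate is the rigorous verification that the $Q$-component of any eigenmeasure is zero and that the decomposition $\mu=\mu_2+\mu_S$ preserves the eigenequation; this is the step where one has to genuinely invoke the openness of $\varphi$, the first-category hypothesis on eventually periodic points, and the detailed structure laid out in Lemma~\ref{l3}(e)--(h), rather than relying on purely soft arguments.
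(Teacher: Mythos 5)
Your proposal is essentially the paper's (implicit) argument for this corollary: identify $def(\lambda I - T)$ with $\dim\ker(\lambda I - T^\prime)$, split any eigenmeasure (using $\varphi^{(-1)}(S)=S$ and $\varphi^{(-1)}(Q)=Q$) into a part supported on $K_2$, counted by the formula of Theorem~\ref{t4} applied to $C(K_2)$, and a discrete part on the strings, counted by the uniqueness statement of Lemma~\ref{l4}, exactly as intended. The only slip is in your exclusion of mass near $K_1$: as in the proof of Lemma~\ref{l3} one should test against $f$ supported in $K\setminus U$ for $U$ a neighborhood of $K_2$, so that $\mathrm{supp}\, T^n f \subseteq K\setminus \varphi^{(-n)}(U)$ shrinks to $K_1$ where $\rho(T,C(K_1))<|\lambda|$ gives the decay (testing $f$ supported near $K_1$ does not directly control $\varphi^{(-n)}(U)$); with that correction, and running this argument on $K\setminus S$ after first splitting off $\mu|_S$ in case (IV), your proof is the paper's.
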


      From Theorem~\ref{t6} easily follows the following criterion for the operator $\lambda I - T$ (where $\lambda > \rho_{min}(T)$) to be Fredholm.

      \begin{theorem} \label{t7} Let $K$ be a compact Hausdorff space, $\varphi$ be a continuous open non-invertible map of $K$ onto itself. Let $w \in C(K)^{-1}$ and
        \begin{equation*}
          (Tf)(k)=w(k)f(\varphi(k)), \; f \in C(K), k \in K.
        \end{equation*}
        Assume that the set of all eventually $\varphi$-periodic points is of first category in $K$.
        Let $\lambda \in \sigma(T)$ and $\lambda > \rho_{min}(T)$. The following conditions are equivalent.

        \noindent (1) The operator $\lambda I - T$ is Fredholm.

        \noindent (2) One of conditions (I) - (IV) is satisfied. Moreover, if the set $Q$ is not empty, then every point of $Q$ is isolated in $K$ and there is a finite subset $\{k_1, \ldots k_p\}$ of $Q$ such that the sets $A_1, \ldots , A_p$ are pairwise disjoint and $Q = \bigcup \limits_{j=1}^p A_j$, where $A_j$ is the smallest $\varphi$ and $\varphi^{(-1)}$ invariant subset of $Q$ that contains $k_j$, i.e.
        $A_j = \bigcup \limits_{n=0}^\infty \bigcup \limits_{m=1}^\infty \varphi^{(-m)}(\varphi^n(k_j))$.
      \end{theorem}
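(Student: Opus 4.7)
The plan is to reduce the Fredholm criterion to an analysis of $\ker(\lambda I - T)$ on top of the lower semi-Fredholm criterion provided by Theorem~\ref{t6}. Since an operator is Fredholm iff it is simultaneously lower and upper semi-Fredholm, and since lower semi-Fredholmness already forces the range to be closed, (1) is equivalent to requiring lower semi-Fredholmness together with $\dim \ker(\lambda I - T) < \infty$.

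For the implication $(1) \Rightarrow (2)$, Fredholmness entails lower semi-Fredholmness, so Theorem~\ref{t6} yields one of the decompositions (I)--(IV). Assume $Q \neq \emptyset$. I would first show that every $k \in Q$ is isolated in $K$: if not, I would adapt the measure construction from Case 1 of Lemma~\ref{l4}, replacing each point mass $\delta_{k_j}$ by a continuous bump supported in a small open neighborhood along a nearby $\varphi$-chain; openness of $\varphi$ together with invertibility of $w$ lets me select pairwise disjoint chains near the chain of $k$ and control the weighted partial sums $\sum_n |\lambda|^n/|w_n(k_{-n})| + \sum_n |w_n(k)|/|\lambda|^n$, yielding a singular sequence $\{f_n\} \subset C(K)$ with $\|f_n\| = 1$ and $(\lambda I - T)f_n \to 0$, contradicting Proposition~\ref{p1}. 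Next, to see $Q$ is a finite union of double orbits $A_j$, I would observe that each isolated $k \in Q$ produces via the piecewise formula from the proof of Lemma~\ref{l3} (made summable by Lemma~\ref{l4}) a nonzero eigenfunction $f \in C(K)$ supported exactly on its double orbit $A$; distinct double orbits give linearly independent kernel elements, so $\dim \ker < \infty$ forces finitely many $A_1, \ldots, A_p$.

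For the implication $(2) \Rightarrow (1)$, Theorem~\ref{t6} supplies lower semi-Fredholmness, so it remains to bound $\dim \ker(\lambda I - T)$. Let $f \in C(K)$ satisfy $Tf = \lambda f$. Invertibility of $w$ and $\lambda \neq 0$ make $U = \{f \neq 0\}$ both forward and backward $\varphi$-invariant. The bound $\rho(T, C(K_1)) < |\lambda|$ gives $f|_{K_1} = 0$; invertibility of $T$ on $C(K_2)$ with $\rho(T^{-1}, C(K_2)) < |\lambda|^{-1}$ yields $\lambda \notin \sigma(T, C(K_2))$, so $f|_{K_2} = 0$. In cases (I) and (II) this forces $f \equiv 0$ (in (II), any $U \subseteq \bigcup_n \varphi^{(-n)}(K_2) \setminus K_2$ eventually iterates forward into $K_2$, contradicting $f|_{K_2} = 0$). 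In cases (III) and (IV), $f$ is supported in $Q \cup S$; under (2), each $k_j \in Q$ is isolated and the recursion $w(k) f(\varphi(k)) = \lambda f(k)$ propagates from the single scalar $f(k_j)$ to determine $f|_{A_j}$ entirely, so $Q$ contributes at most $p$ dimensions, while $S$ contributes at most $\mathrm{card}(\mathbf{S})$ by Lemma~\ref{l4}. Hence $\dim \ker(\lambda I - T) \leq p + \mathrm{card}(\mathbf{S}) < \infty$.

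The main obstacle will be lifting the Case 1 construction of Lemma~\ref{l4} from point measures in $C(K)'$ to continuous functions in $C(K)$: in the non-isolated situation for $Q$, producing a singular sequence $\{f_n\} \subset C(K)$ with $(\lambda I - T) f_n \to 0$ requires simultaneously using openness of $\varphi$ to select pairwise disjoint neighborhood chains along many distinct nearby $\varphi$-strings, building continuous bumps with the correct normalizing weights $\lambda^i/w_i$ and $\lambda^{-i} w_i$, and verifying that disjointness together with summability of the weighted series keeps $\|(\lambda I - T) f_n\|$ negligible compared to $\|f_n\|$.
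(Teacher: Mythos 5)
Your overall architecture -- Fredholm $=$ lower semi-Fredholm (so Theorem~\ref{t6} applies) plus $\dim\ker(\lambda I-T)<\infty$, with the kernel analyzed via forward/backward invariance of $\{f\neq 0\}$ -- is the right frame, and your $(2)\Rightarrow(1)$ direction is essentially sound apart from one slip: $T$ is in general \emph{not} invertible on $C(K_2)$ (in cases (I)--(IV) $\varphi$ is only an almost homeomorphism of $K_2$), so to get $f|_{K_2}=0$ you should argue from $\rho_{min}(T,C(K_2))>|\lambda|$ that $\lambda\notin\sigma_{a.p.}(T,C(K_2))$, exactly as in the proof of Theorem~\ref{t4}; the rest (vanishing on $K_1$ and on $\bigcup_n\varphi^{(-n)}(K_2)$, one scalar per grand orbit $A_j$ and per string of $S$) is fine.

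The genuine gap is in $(1)\Rightarrow(2)$, at the step ``if some $k\in Q$ is not isolated, build a singular sequence $f_n\in C(K)$ with $(\lambda I-T)f_n\to 0$.'' First, the quantities you propose to control, $\sum_n|\lambda|^n/|w_n(k_{-n})|+\sum_n|w_n(k)|/|\lambda|^n$, are the summability conditions of Lemma~\ref{l4}, which belong to the \emph{dual} (eigen-measure) construction and to the strings of $\mathbf{S}$, i.e.\ to the regime $|w_n(k)|\leq|\lambda|^n$, $|w_n(k_{-n})|\geq|\lambda|^n$. Points of $Q$ sit in the opposite regime: their forward orbits tend to $K_2$ where $\rho_{min}(T,C(K_2))>|\lambda|$ and their backward trees tend to $K_1$ where $\rho(T,C(K_1))<|\lambda|$, so both of the series you quote diverge and the construction as described cannot be run; on the function side the decaying quantities are the reciprocals $|\lambda|^n/|w_n(k)|$ and $|w_n(x)|/|\lambda|^n$, $x\in\varphi^{(-n)}(\cdot)$. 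Second, an approximate eigenfunction of $T$ cannot be supported ``along a nearby $\varphi$-chain'': $Tf$ lives on the full preimage set $\varphi^{(-1)}(\operatorname{supp}f)$, so any such $f$ must be built coherently on entire backward trees (as the bidual eigenfunction of Lemma~\ref{l3} is), and what makes $\|(\lambda I-T)f\|/\|f\|$ small is the growth of $|w_N(k)|/|\lambda|^N$ along the forward orbit, not disjointness plus summability. Third, even granting such approximate eigenfunctions, singularity does not follow from non-isolatedness alone: the nearby points of $k$ may all lie in the same grand orbit, and the sequence may simply converge in norm to a genuine eigenfunction, so no contradiction with Proposition~\ref{p1} is obtained. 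A repair that stays within the paper's toolkit is to use that for a Fredholm operator $\ker(\lambda I-T'')=\ker(\lambda I-T)$: each grand orbit in $Q$ carries the bounded eigenfunction of $T''$ from Lemma~\ref{l3} (its decay coming from $\rho(T,C(K_1))<|\lambda|<\rho_{min}(T,C(K_2))$ and property (g), not from Lemma~\ref{l4}); Fredholmness forces each of these to be a continuous function, and continuity (value $1$ at $k$, zero off a countable set with decaying values) is what yields the isolatedness of points of $Q$, while linear independence of the disjointly supported eigenfunctions yields $Q=A_1\cup\cdots\cup A_p$ with $p<\infty$ -- the fact you already use for finiteness but not where it is most needed. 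As written, the isolatedness claim, and hence the equivalence, is not established.
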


      \begin{corollary} \label{c9} Assume condition (2) of Theorem~\ref{t7}. Then
      \begin{equation}\label{eq34}
        ind(\lambda I - T) = p - def(\lambda I - T),
      \end{equation}
        where $p$ is from the statement of Theorem~\ref{t7} and $def(\lambda I - T)$ is defined by~(\ref{eq33}).
      \end{corollary}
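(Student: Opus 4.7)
The plan is to reduce (\ref{eq34}) to the assertion $\dim \ker(\lambda I - T) = p$. Since $\lambda I - T$ is Fredholm, $ind(\lambda I - T) = \dim \ker(\lambda I - T) - def(\lambda I - T)$, and $def(\lambda I - T)$ is already identified by Corollary~\ref{c7}, so computing the kernel dimension is the entire content of the corollary.

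For the upper bound $\dim \ker(\lambda I - T) \leq p$, I would analyze the support of any $f \in C(K)$ with $Tf = \lambda f$. The inequality $\rho(T, C(K_1)) < |\lambda|$ places $\lambda$ outside $\sigma(T, C(K_1))$ and forces $f|_{K_1} = 0$. By Lemma~\ref{l3}(c) the map $\varphi$ is an almost homeomorphism of $K_2$, so Theorem~\ref{t4} applied to the restriction $T|_{C(K_2)}$ yields $\rho_{min}(T, C(K_2))\mathds{D} \subseteq \sigma_r(T, C(K_2))$; since $|\lambda| < \rho_{min}(T, C(K_2))$, this gives $f|_{K_2} = 0$. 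Iterating $f(t) = \lambda^{-n} w_n(t) f(\varphi^n(t))$ propagates the vanishing to $\bigcup_n \varphi^{(-n)}(K_2)$. For any $\mathbf{s} \in \mathbf{S}$ with projection $\{s_n\} \subset S$, the identity $f(s_n) = \lambda^n w_n(s_0)^{-1} f(s_0)$ combined with $|w_n(s_0)| \leq |\lambda|^n$ forces $|f(s_n)| \geq |f(s_0)|$, so $f$ cannot decay along that forward orbit; since the orbit must accumulate on $K_1 \cup K_2$ (where $f$ already vanishes), continuity gives $f|_S = 0$. Thus $supp(f) \subseteq Q = \bigsqcup_{j=1}^p A_j$. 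The first-category hypothesis, combined with condition~(2) of Theorem~\ref{t7} (which makes every point of $Q$ isolated in $K$), rules out eventually periodic $k_j$, so the eigenvalue equation propagates $f|_{A_j}$ unambiguously from the single scalar $f(k_j)$, yielding $\dim \ker(\lambda I - T) \leq p$.

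For the reverse inequality, I would construct for each $j$ an eigenfunction $f_j$ with $f_j(k_j) = 1$: set $f_j(\varphi^n(k_j)) = \lambda^n/w_n(k_j)$ on the forward orbit, define $f_j(t) = \lambda^{-m} w_m(t) f_j(\varphi^m(t))$ for preimages $t \in \varphi^{(-m)}(\varphi^n(k_j))$ inside $A_j$, and set $f_j = 0$ outside $A_j$. Linear independence of $f_1, \ldots, f_p$ is automatic from disjointness of the $A_j$, so the only nontrivial task is verifying $f_j \in C(K)$. Continuity is automatic at isolated points of $A_j$, and by Lemma~\ref{l3}(g) every limit point of $A_j$ in $K \setminus A_j$ lies in $K_1 \cup K_2$. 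Forward-orbit decay $|\lambda^n/w_n(k_j)| \to 0$ follows from a Birkhoff-type compactness argument: weak-$\ast$ limits of $n^{-1}\sum_{i=0}^{n-1}\delta_{\varphi^i(k_j)}$ are $\varphi$-invariant and supported in the $\omega$-limit set of $k_j$, which lies in $K_2$ by Lemma~\ref{l3}(g), and continuity of $\ln|w|$ (using $w \in C(K)^{-1}$) together with $\rho_{min}(T, C(K_2)) > |\lambda|$ gives $\liminf n^{-1}\ln|w_n(k_j)| > \ln|\lambda|$. The symmetric argument using $\rho(T, C(K_1)) < |\lambda|$ handles decay of $f_j$ on backward preimages approaching $K_1$.

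The main obstacle I anticipate is controlling sequences $t_n \in A_j$ whose limit in $K_2$ does \emph{not} lie on the forward orbit $\{\varphi^n(k_j)\}$. The decisive tool here is Lemma~\ref{l3}(h): there is an open neighborhood $V$ of $K_2$ on which $\varphi|_{V \cap Q}$ is one-to-one and $\varphi(V \cap Q) \subseteq V \cap Q$. This injectivity forces any such approach sequence $t_n$ eventually to coincide with the forward orbit branch, where the exponential decay just established applies; the symmetric statement handles $K_1$-limits on the backward side. Once continuity is secured, the $f_j$ are $p$ linearly independent elements of $\ker(\lambda I - T)$, and combining with the upper bound gives $\dim \ker(\lambda I - T) = p$, from which (\ref{eq34}) follows.
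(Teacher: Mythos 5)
Your reduction of the corollary to the equality $\dim \ker(\lambda I - T) = p$ is the right reading of the statement (the paper states the corollary without proof, so the comparison is with the argument its machinery suggests, namely one kernel dimension per invariant set $A_j$), and your forward-orbit decay estimate is sound: Krylov--Bogolyubov limits of the Ces\`aro averages are invariant measures supported on the $\omega$-limit set of $k_j$, which lies in $K_2$ by Lemma~\ref{l3}(g), and $\rho_{min}(T,C(K_2))>|\lambda|$ then gives $|\lambda|^n/|w_n(k_j)|\to 0$. The upper bound, however, has a gap at the set $S$: the assertion that the forward orbit of a string ``must accumulate on $K_1\cup K_2$'' is not justified. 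What you actually need is that every point of the finitely many strings constituting $S$ is isolated in $K$ (this is what the construction in the proof of Lemma~\ref{l4} delivers, via the singular sequences built near a non-isolated point); then any limit point of a string lies in $K_1\cup\bigcup_{n\ge 0}\varphi^{(-n)}(K_2)$, where you have already shown $f=0$, and the inequalities $|f(s_n)|\ge |f(s_0)|$ and $|f(s_{-n})|\ge|f(s_0)|$ (from $|w_n(s_0)|\le|\lambda|^n$, $|w_n(s_{-n})|\ge|\lambda|^n$) force $f\equiv 0$ on the string. Without the isolation of the points of $S$, accumulation inside $S$ itself is not excluded by anything you say.

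The more serious gap is in the continuity of the constructed eigenfunctions $f_j$. Your use of Lemma~\ref{l3}(h) is incorrect as stated: injectivity of $\varphi$ on $V\cap Q$ together with $\varphi(V\cap Q)\subseteq V\cap Q$ does \emph{not} force a sequence $t_i\in A_j$ converging to a point of $K_2$ to lie on the forward orbit of $k_j$. If $t\in V\cap Q$ is off the orbit and merges with it at $\varphi^m(t)=\varphi^n(k_j)$ with $\varphi^{m-1}(t)\neq\varphi^{n-1}(k_j)$, injectivity only yields $\varphi^{n-1}(k_j)\notin V$; hence off-orbit branches can enter $V$, merely restricted to merging at the finitely many positions $n$ for which $\varphi^{n-1}(k_j)\notin V$. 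For such points you still must show $f_j(t)=\lambda^{-m}w_m(t)f_j(\varphi^m(t))\to 0$, which requires a quantitative bound on $|w_m(t)|/|\lambda|^m$ along branches whose long initial stretch lies near $K_1$; this is obtained from $\rho(T,C(K_1))<|\lambda|$ by fixing $N$ and $\theta<|\lambda|$ with $\max_{K_1}|w_N|<\theta^N$ and extending this to a neighborhood of $K_1$ by compactness -- the same uniform-neighborhood estimate your ``symmetric argument'' near $K_1$ actually needs (it is not merely the mirror image of the single-orbit Birkhoff argument, and one must also check that points of $A_j$ approaching $K_1\cup K_2$ have depth tending to infinity). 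A cleaner route to $\dim\ker(\lambda I-T)\ge p$, consistent with the device already used in the proof of Lemma~\ref{l3}, is to bypass continuity on $K$ altogether: the bounded function supported on the grand orbit of $k_j$ defines an element $f_j\in C(K)^{\prime\prime}$ with $T^{\prime\prime}f_j=\lambda f_j$, and since $\lambda I-T$ is Fredholm one has $\dim\ker(\lambda I-T)=\dim\ker(\lambda I-T^{\prime\prime})$ (cf.~\cite[Theorem 4.42]{AA}); only your boundedness/decay estimates are then needed, not the delicate boundary continuity.
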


      \begin{corollary} \label{c4} Assume either conditions of Theorem~\ref{t6}, or of Theorem~\ref{t7}. Then the spectrum $\sigma_{sfl}(T)$ or, respectively, the Fredholm spectrum $\sigma_f(T)$ is rotation invariant

      \end{corollary}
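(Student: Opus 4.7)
The plan is to exploit the fact that every characterization of lower semi-Fredholmness appearing in Theorems~\ref{t4}, \ref{t6}, and~\ref{t7} is stated in terms of $\lambda$ only through $|\lambda|$, so once the rotation invariance of $\sigma(T)$ itself is established the corollary follows almost mechanically. First I would partition $\mathds{C}$ into three regions according to the modulus of $\lambda$: the open disk $\rho_{min}(T)\mathds{D}$, the circle $\{|\lambda|=\rho_{min}(T)\}$, and the exterior annulus $\{|\lambda|>\rho_{min}(T)\}$. On the open disk, Theorem~\ref{t4} provides an equivalence in which the condition ``$\varphi$ is an almost homeomorphism'' does not involve $\lambda$ at all; consequently $\rho_{min}(T)\mathds{D}$ is either entirely contained in $\sigma_{lsf}(T)$ (equivalently in $\sigma_f(T)$) or entirely disjoint from both, so rotation invariance is trivial on this region.

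On the annulus $|\lambda|>\rho_{min}(T)$ I would invoke Theorem~\ref{t6} (respectively Theorem~\ref{t7}): the key observation is that each of the conditions (I)--(IV), together with the supplementary clause of Theorem~\ref{t7} about the structure of $Q$, involves $\lambda$ exclusively through the strict inequalities $\rho(T,C(K_1))<|\lambda|$, $\rho_{min}(T,C(K_2))>|\lambda|$, and the growth bounds $|w_n(s_0)|\leq|\lambda|^n$ and $|w_n(s_{-n})|\geq|\lambda|^n$ on $\varphi$-strings in $S$. Hence if any of (I)--(IV) holds for a given $\lambda$ it holds verbatim for every $\lambda'$ with $|\lambda'|=|\lambda|$, witnessed by the same decomposition $K=K_1\cup\bigcup_{n}\varphi^{(-n)}(K_2)\cup Q\cup S$. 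To convert this into rotation invariance of $\sigma_{lsf}(T)$ and $\sigma_f(T)$ I would then use that, under the first-category hypothesis on eventually $\varphi$-periodic points, the full spectrum $\sigma(T)$ is rotation invariant---a fact I would deduce from Corollary~\ref{c2} and Theorem~3.7 in~\cite{Ki1} applied to the associated operator $\mathbf{T}$, passing from $\sigma(\mathbf{T})$ to $\sigma(T)$ by the identification already exploited in the proof of Lemma~\ref{l2}.

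The circle $|\lambda|=\rho_{min}(T)$ I would dispose of by closedness: because $\Phi^-(C(K))$ and $\Phi(C(K))$ are open subsets of $L(C(K))$ and the map $\lambda\mapsto\lambda I-T$ is continuous, both $\sigma_{lsf}(T)$ and $\sigma_f(T)$ are closed in $\mathds{C}$, so rotation invariance on the interior and on the exterior of that circle propagates to the circle itself by taking limits. The step I expect to require the most care is the transfer from the rotation invariance of $\sigma(\mathbf{T})$ to that of $\sigma(T)$: although this identification is implicit in the proofs of Lemmas~\ref{l2} and~\ref{l3}, I would want to verify that the argument goes through in the present generality, perhaps by reducing to the rotation invariance of the approximate point and residual spectra separately via the characterizations already in the text.
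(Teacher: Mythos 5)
Your treatment of the open disk via Theorem~\ref{t4} and of the region $|\lambda|>\rho_{min}(T)$ via Theorems~\ref{t6} and~\ref{t7} (the conditions (I)--(IV) depend on $\lambda$ only through $|\lambda|$, and rotation invariance of $\sigma(T)$ lets you apply the theorems to every point of the circle $|\lambda|\mathds{T}$) is exactly the intended argument; note that this also covers $|\lambda|=\rho(T)$, since Theorem~\ref{t6} is stated for all $\lambda\in\sigma(T)$ with $|\lambda|>\rho_{min}(T)$. The genuine gap is your handling of the critical circle $|\lambda|=\rho_{min}(T)$, where neither Theorem~\ref{t4} nor Theorem~\ref{t6} applies. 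Closedness of $\sigma_{lsf}(T)$ and $\sigma_f(T)$ does not propagate rotation invariance onto that circle: closedness only says that limits of points \emph{in} the spectrum stay in it, whereas the situation you must exclude is that the nearby punctured annulus lies entirely in the complement (which your own rotation-invariance argument off the circle would force once a single nearby point is semi-Fredholm) while one point of the circle lies in $\sigma_{lsf}(T)$ and another does not. Nothing topological forbids this; semi-Fredholm spectra can perfectly well have isolated points (e.g.\ $\sigma_f$ of a compact operator). The paper closes this case by a substantive separate argument, Corollary~\ref{c3}: using the punctured neighborhood theorem, the rotation invariance of $\sigma(T)$, the construction of a $\varphi$-invariant compact $K_\infty$ with $\rho(T,C(K_\infty))=\rho_{min}(T)$, and Theorem 3.12 of~\cite{Ki1}, it shows that \emph{no} point with $|\lambda|=\rho_{min}(T)$ (or $|\lambda|=\rho(T)$) can be lower semi-Fredholm, so these whole circles lie in $\sigma_{lsf}(T)\subseteq\sigma_f(T)$ and are trivially rotation invariant. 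Your proof needs this (or an equivalent direct argument) in place of the limit step.

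A secondary point: you propose to deduce the rotation invariance of $\sigma(T)$ from that of $\sigma(\mathbf{T})$ via the identification used in Lemma~\ref{l2}. But $\sigma(T)$ and $\sigma(\mathbf{T})$ do not coincide in general --- Lemma~\ref{l2} explicitly treats $\lambda\in\sigma(T)\setminus\sigma(\mathbf{T})$ --- so this transfer is not automatic. It is cleaner to cite the fact directly from~\cite{Ki1} (as the paper does in the proof of Corollary~\ref{c3}): when the set of eventually $\varphi$-periodic points is of first category in $K$, the spectrum $\sigma(T)$ itself is rotation invariant.
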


      \begin{corollary} \label{c3} Let $K$ be a compact Hausdorff space, $\varphi$ be a continuous open map of $K$ onto itself. Let $w \in C(K)^{-1}$ and
        \begin{equation*}
          (Tf)(k)=w(k)f(\varphi(k)), \; f \in C(K), k \in K.
        \end{equation*}
         Assume that the set of all eventually $\varphi$-periodic points is of first category in $K$.
        Assume that either $|\lambda|=\rho(T)$ or $|\lambda|=\rho_{min}(T)$. Then the operator $\lambda I - T$ cannot be lower semi-Fredholm.
      \end{corollary}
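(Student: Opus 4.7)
The plan is to argue by contradiction in each of the two cases, using the following basic monotonicity. For any nonempty closed $F\subseteq K$ with $\varphi(F)\subseteq F$, every $\varphi|_F$-invariant Borel probability measure on $F$ extends by zero to a $\varphi$-invariant probability measure on $K$, so formulas~(\ref{eq11}) and~(\ref{eq12}) immediately give
\begin{equation*}
\rho_{min}(T)\leq\rho_{min}(T,C(F))\leq\rho(T,C(F))\leq\rho(T).
\end{equation*}

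For $|\lambda|=\rho(T)$ I first consider the generic subcase $\rho(T)>\rho_{min}(T)$, which puts $\lambda$ in the hypothesis of Theorem~\ref{t6}. If $\lambda I-T$ were lower semi-Fredholm, each of the alternatives (I)--(IV) would produce a nonempty closed $\varphi$-invariant $K_2\subseteq K$ with $\varphi(K_2)=K_2$ and $\rho_{min}(T,C(K_2))>|\lambda|=\rho(T)$, in direct contradiction with the monotonicity $\rho_{min}(T,C(K_2))\leq\rho(T)$. The degenerate subcase $\rho(T)=\rho_{min}(T)$ falls into the second case below.

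For $|\lambda|=\rho_{min}(T)$, Theorem~\ref{t6} is not directly available, as it requires the strict inequality $|\lambda|>\rho_{min}(T)$. Assuming $\lambda\notin\sigma_{lsf}(T)$, I would use the openness of $\Phi^-(C(K))$ together with the rotation invariance of $\sigma_{lsf}(T)$ from Corollary~\ref{c4} to find $\varepsilon>0$ such that $\mu I-T\in\Phi^-$ for every $\mu$ in the open annulus $\rho_{min}(T)<|\mu|<\rho_{min}(T)+\varepsilon$. For each such $\mu\in\sigma(T)$, Theorem~\ref{t6} then supplies a nonempty closed $\varphi$-invariant $K_1^\mu$ with $\rho(T,C(K_1^\mu))<|\mu|$, so the monotonicity squeezes $\rho_{min}(T)\leq\rho(T,C(K_1^\mu))<|\mu|$. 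Passing to a Hausdorff-convergent subsequence as $|\mu|\searrow\rho_{min}(T)$ and using upper semicontinuity of $F\mapsto\rho(T,C(F))$ produces a nonempty closed $\varphi$-invariant $K_1\subseteq K$ with $\rho(T,C(K_1))=\rho_{min}(T)$. To close the argument, I would mirror the construction of formulas~(\ref{eq28})--(\ref{eq31}) in Lemma~\ref{l4}: pick an ergodic $\varphi$-invariant measure $\mu_0$ with $\int\ln|w|\,d\mu_0=\ln\rho_{min}(T)$ and build from $\mu_0$-typical orbits a singular sequence $\nu_n\in C(K)^\prime$ satisfying $T^\prime\nu_n-\lambda\nu_n\to 0$, contradicting $\lambda\notin\sigma_{lsf}(T)$ through Proposition~\ref{p2}.

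The principal obstacle will be the second case. Two subtleties demand care: first, the existence of the approximating $\mu\in\sigma(T)$ with $|\mu|$ slightly above $\rho_{min}(T)$ rests on the accumulation of $\sigma(T)$ at the inner circle, which I expect to handle using rotation invariance of $\sigma(\mathbf{T})$ combined with the intertwining relation between $T$ and $\mathbf{T}$; second, the singular-sequence construction from $\mu_0$ must avoid the degenerate situation in which $\mu_0$ is supported on a single isolated periodic orbit, and it is precisely here that the first-category hypothesis on eventually $\varphi$-periodic points becomes essential.
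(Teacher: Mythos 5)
Your treatment of the circle $|\lambda|=\rho(T)$ (in the subcase $\rho(T)>\rho_{min}(T)$) is sound and genuinely different from the paper's: you derive the contradiction from the monotonicity $\rho_{min}(T)\leq\rho_{min}(T,C(K_2))\leq\rho(T,C(K_2))\leq\rho(T)$ applied to the nonempty invariant set $K_2$ produced by any of the alternatives (I)--(IV) of Theorem~\ref{t6}, whereas the paper argues via the punctured neighborhood theorem: semi-Fredholmness at a point of modulus $\rho(T)$ would make $\lambda$ isolated in $\sigma(T)$, contradicting the rotation invariance of $\sigma(T)$. Both are correct there; yours is more elementary, the paper's does not need to split off the degenerate case $\rho(T)=\rho_{min}(T)$.

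The genuine gap is in your second case, $|\lambda|=\rho_{min}(T)$, and it is twofold. First, the limit construction of $K_1$ fails as stated: $F\mapsto\rho(T,C(F))=\inf_n\bigl(\max_F|w_n|\bigr)^{1/n}$ is an infimum of functions continuous in the Vietoris topology, hence \emph{upper} semicontinuous, and upper semicontinuity gives only $\rho(T,C(K_1))\geq\limsup\rho(T,C(K_1^{\mu_n}))$, i.e.\ the lower bound you already have from monotonicity; the inequality you actually need, $\rho(T,C(K_1))\leq\rho_{min}(T)$, would require lower semicontinuity, which an infimum of continuous functions does not provide (and "Hausdorff-convergent subsequence" must in any case be replaced by Vietoris subnets, since $K$ need not be metrizable). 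The paper's $K_\infty$ is obtained so that this problem does not arise (in effect from a nested/canonical family, where a monotone intersection gives $\rho(T,C(K_\infty))\leq|\lambda_n|$ for all $n$); with the non-canonical sets $K_1^{\mu}$ of Theorem~\ref{t6} nestedness is not guaranteed. Second, and more seriously, your closing step -- building a singular sequence of functionals from an ergodic measure minimizing $\int\ln|w|\,d\mu$ -- is exactly the crux of the inner-circle case and is left unconstructed; note that it does not even use the set $K_1$ you built, so the two halves of your plan do not connect. Birkhoff typicality gives only the asymptotic $\tfrac1n\ln|w_n(k)|\to\ln|\lambda|$, not the two-sided pointwise bounds that drive the constructions (\ref{eq28})--(\ref{eq31}) in Lemma~\ref{l4}, and producing a \emph{singular} sequence (infinitely many essentially disjoint approximate eigenfunctionals) from one ergodic measure requires a genuine argument. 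The degenerate case you flag -- the minimizing measure carried by a periodic orbit -- is not removed by the first-category hypothesis: that hypothesis only guarantees (via Baire) that such a periodic point is not isolated, after which one would still have to redo the Case 3 construction of Lemma~\ref{l4} near that orbit; it does not by itself ``become essential'' and finish the proof. The paper avoids this entirely by a different mechanism: it shows lower semi-Fredholmness would persist on $C(K_\infty)$, applies the punctured neighborhood theorem there to conclude $\sigma(T,C(K_\infty))$ is not rotation invariant, and then invokes Theorem 3.12 of~\cite{Ki1} to obtain a periodic point with $|w_p(k)|<|\lambda|^p$, contradicting the definition of $\rho_{min}(T)$. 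As it stands, your proposal does not close the case $|\lambda|=\rho_{min}(T)$ (nor, consequently, the case $|\lambda|=\rho(T)=\rho_{min}(T)$ that you deferred to it).
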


      \begin{proof} Let $|\lambda|=\rho(T)$. If the operator $\lambda I - T$ is semi-Fredholm then by the punctured neighborhood theorem (see~\cite{Sch}) $\lambda$ is an isolated point in $\sigma(T)$. On the other hand, it follows from our assumption that the set of all eventually $\varphi$-periodic points is of first category in $K$ that $\sigma(T)$ is rotation invariant (see~\cite{Ki1}). As $\rho(T) >0$, we have a contradiction.

      Next, assume that $|\lambda| = \rho_{min}(T)$ and the operator $\lambda I - T$ is lower semi-Fredholm. Then there are $\lambda_n \in \mathds{C}, n \in \mathds{N}$ such that $|\lambda_n| \downarrow |\lambda|$ and either $\lambda_n \not \in \sigma(T), n \in \mathds{N}$, or $\lambda _n \in \sigma(T)$ and the operator $\lambda_n I - T$ is lower semi-Fredholm for any $n \in \mathds{N}$.

      Applying in the first case Theorem 3.10 from~\cite{Ki1} and in the second case Theorem~\ref{t6} we can see that there is a closed subset $K_\infty$ of $K$ such that $\varphi(K_\infty) = K_\infty$ and $\rho(T, C(K_\infty)) = |\lambda|=\rho_{min}(T)$.

      We claim that the operator $\lambda I - T$ considered on the space $C(K_\infty)$ is lower semi-Fredholm. Indeed, otherwise there is a singular sequence $\mu_n \in C(K_\infty)^\prime$ such that $\|\mu_n\|=1$ and $T^\prime \mu_n - \lambda \mu_n \rightarrow 0$. Considering the measures $\mu_n$ as elements of $C(K)^\prime$ we come to a contradiction.

      Applying the punctured neighborhood theorem to the operator $\lambda I - T$ on $C(K_\infty)$ we see that the set $\sigma(T, C(K_\infty))$ is not rotation invariant. By Theorem 3.12 from ~\cite{Ki1} there is a $\varphi$-periodic point $k \in K_\infty$ such that $|w_p(k)| < |\lambda|^p$, where $p$ is the period of $k$. But the last inequality contradicts the definition of $\rho_{min}(T)$.
       \end{proof}

       To state our next result we have to recall the following definition introduced in~\cite{FK}

       \begin{definition} \label{d5}
         Let $K$ be a compact Hausdorff space. We say that $K \in AH$ if every almost homeomorphism of $K$ onto itself is a homeomorphism.
       \end{definition}

       \begin{theorem} \label{t8} Let $K$ be a compact Hausdorff space, $\varphi$ be a continuous non-invertible map of $K$ onto itself, and $w \in C(K)^{-1}$. Assume that the set of all eventually $\varphi$-periodic points is of first category in $K$.

       Assume that $K \in AH$. Then,
       \begin{equation*}
         \sigma_f(T) = \sigma(T).
       \end{equation*}
       \end{theorem}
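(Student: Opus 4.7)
The inclusion $\sigma_f(T) \subseteq \sigma(T)$ is automatic, so I aim to show every $\lambda \in \sigma(T)$ satisfies $\lambda I - T \notin \Phi$, splitting the argument by $|\lambda|$. The starting observation is that, since $K \in AH$ and $\varphi$ is a non-invertible continuous surjection, $\varphi$ fails to be an almost homeomorphism of $K$; by the $(2) \Leftrightarrow (3)$ direction of Theorem~\ref{t4} applied at $\mu = 0$, $T$ is not Fredholm, and in fact $\mathrm{def}(T) = \infty$ since the cardinality in~(\ref{eq10}) is infinite.

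For $|\mu| < \rho_{min}(T)$ the operator $\mu I - T$ is bounded below (by the $\rho_{min}(T)\mathds{D} \subseteq \sigma_r(T)$ part of the proof of Theorem~\ref{t4}), so upper semi-Fredholm with trivial kernel. The Fredholm index is locally constant on the open set of semi-Fredholm operators; since $\rho_{min}(T)\mathds{D}$ is connected and $\mathrm{def}(T) = \infty$, I obtain $\mathrm{def}(\mu I - T) = \infty$ throughout, so $\mu I - T \notin \Phi$. This settles $|\lambda| < \rho_{min}(T)$. For $|\lambda| \in \{\rho_{min}(T), \rho(T)\}$, Corollary~\ref{c3} gives $\lambda I - T \notin \Phi^-$, hence $\lambda I - T \notin \Phi$.

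For $\rho_{min}(T) < |\lambda| < \rho(T)$ with $\lambda \in \sigma(T)$, I assume for contradiction that $\lambda I - T \in \Phi$ and apply Theorem~\ref{t7} to land in one of conditions (I)--(IV) of Theorem~\ref{t6}. In cases (I) and (II), $K_2$ is clopen in $K$ and $\varphi|_{K_2}$ is explicitly not a homeomorphism. I define $\psi: K \to K$ by $\psi|_{K_2} = \varphi|_{K_2}$ and $\psi|_{K \setminus K_2} = \mathrm{id}$; clopenness of $K_2$ makes $\psi$ continuous and surjective, its exception set equals that of $\varphi|_{K_2}$ (finite), and it is not a homeomorphism because $\varphi|_{K_2}$ is not. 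Thus $\psi$ is a non-homeomorphism almost homeomorphism of $K$, contradicting $K \in AH$.

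The main obstacle will be cases (III) and (IV), in which $K_2$ is merely closed and $\varphi|_{K_2}$ may a priori be a homeomorphism. My plan is to exploit the refined structure in Theorem~\ref{t7}: when $Q \neq \emptyset$, the points of $Q$ are isolated in $K$ and $Q$ decomposes into a finite union of full $\varphi$-orbits $A_1, \ldots, A_p$, while in case (IV) the set $S$ is similarly a finite union of $\varphi$-strings of points isolated in $K$ (Lemma~\ref{l4}). Because all accumulation points of $Q$ and of $S$ lie in $K_1 \cup K_2$ by Lemma~\ref{l3}(g) and the analogous argument from Lemma~\ref{l4}, one can replace $\varphi$ on the discrete set $Q \cup S$ by a bijective self-map of this discrete set while leaving $\varphi$ unchanged on $K_1 \cup K_2 \cup \bigcup_{n \geq 0} \varphi^{(-n)}(K_2)$; the resulting $\tilde\varphi$ is continuous (the modification only touches isolated points) and its only non-injectivity comes from the finitely many branch pairs of $\varphi|_{K_2}$, so $\tilde\varphi$ is an almost homeomorphism of $K$. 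Applying $K \in AH$ forces $\tilde\varphi$, hence $\varphi|_{K_2}$, to be a homeomorphism and $p = 0 = \mathrm{card}\,\mathbf{S}$, i.e.\ $Q = \emptyset = S$. Then Corollary~\ref{c7} gives $\mathrm{def}(\lambda I - T) = 0$ and Corollary~\ref{c9} gives $\mathrm{ind}(\lambda I - T) = 0$, so $\lambda I - T$ is invertible, contradicting $\lambda \in \sigma(T)$. The delicate technical point will be verifying continuity of $\tilde\varphi$ at the accumulation points of $Q \cup S$ inside $K_1 \cup K_2$.
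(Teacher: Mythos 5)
Your handling of the disk $|\lambda|<\rho_{min}(T)$ (bounded below throughout, then local constancy of the extended-valued index starting from $\mathrm{def}(T)=\infty$), of the two boundary circles via Corollary~\ref{c3}, and of cases (I) and (II) in the annulus is correct in substance. In particular, your extension-by-identity map $\psi$ ($\psi=\varphi$ on the clopen invariant $K_2$, identity elsewhere) is exactly the kind of construction needed to convert ``$\varphi$ is an almost homeomorphism but not a homeomorphism of $K_2$'' into an almost homeomorphism of $K$ itself and hence into a contradiction with $K\in AH$; the paper's proof disposes of the whole annulus with a one-line citation of Theorem~\ref{t7} and leaves this step implicit, so here you are supplying detail the paper omits.

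The genuine gap is in cases (III) and (IV), and it is threefold. First, conditions (III)/(IV) only assert that $\varphi$ is an almost homeomorphism \emph{of} $K_2$; nothing in Lemma~\ref{l3} or Theorem~\ref{t7} bounds the non-injectivity of $\varphi$ on $\varphi^{(-n)}(K_2)\setminus K_2$, $n\geq 1$, where you leave $\varphi$ unchanged. Hence even if you could make $\tilde\varphi$ bijective on $Q\cup S$, its branch pairs need not be confined to $K_2$ and need not be finite, so $\tilde\varphi$ need not be an almost homeomorphism of $K$ and no contradiction with $K\in AH$ results. Second, the bijective modification on $Q\cup S$ is not constructed: inside a single grand orbit $A_j$ the map can branch arbitrarily (points of $Q$ may have many preimages in $Q$), and it is not shown that any bijection of this countable discrete set can agree with $\varphi$ asymptotically so as to remain continuous at the accumulation points of $Q\cup S$; moreover those accumulation points are only known to lie in $K_1\cup\bigcup_{n\geq 0}\varphi^{(-n)}(K_2)$ (Lemma~\ref{l3}(g) controls iterates of a fixed closed subset of $Q$, not the closure of $Q$), so your appeal to it does not place them in $K_1\cup K_2$. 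Third, even granting the construction, the conclusion is a non sequitur: if $K\in AH$ forces $\tilde\varphi$ to be a homeomorphism, that yields injectivity of $\varphi$ on $K_2$ but cannot yield $p=0$ or $\mathbf{S}=\emptyset$, since you made $\tilde\varphi$ bijective on $Q\cup S$ by fiat whether or not these sets are empty. Without $Q=\emptyset=S$ you cannot invoke Corollaries~\ref{c7} and~\ref{c9} to get $\mathrm{def}(\lambda I-T)=\mathrm{ind}(\lambda I-T)=0$ and the final contradiction with $\lambda\in\sigma(T)$. So precisely in the subcases where the paper's citation of Theorem~\ref{t7} hides the real work, your argument does not close.
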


       \begin{proof} Consider $\lambda \in \sigma(T)$. we have to consider three cases.
       \begin{enumerate}
         \item $|\lambda| < \rho_{min}(T)$. The operator $\lambda I - T$ cannot be Fredholm by Theorem~\ref{t4}.
         \item $|\lambda| = \rho_{min}(T)$ or $|\lambda| = \rho(T)$. The operator $\lambda I - T$ cannot be Fredholm by Corollary~\ref{c3}
         \item $\rho_min(T) < |\lambda| < \rho(T)$. The operator $\lambda I - T$ cannot be Fredholm by Theorem~\ref{t7}.
       \end{enumerate}
         \end{proof}

         While a complete characterization of the class AH remains unknown, numerous sufficient conditions guaranteeing that $K \in AH$ were obtained by Friedler and Kitover in~\cite{FK} and by Vermeer in~\cite{Ve}. We list some of this conditions in Corollary~\ref{c8}.

         \begin{corollary} \label{c8} Let $K$ be a compact Hausdorff space, $\varphi$ be a continuous non-invertible map of $K$ onto itself, and $w \in C(K)^{-1}$. Assume that the set of all $\varphi$-periodic points is of first category in $K$.

         Assume one of the following conditions.
         \begin{itemize}
           \item The compact space $K$ is extremally disconnected and has no isolated points.
           \item The compact space $K$ is an $F$-space without isolated points that satisfies the countable chain condition.
           \item The compact space $K$ is arcwise connected and one of the following conditions is satisfied
               \begin{enumerate}[(a)]
                 \item The fundamental group $\Pi_1(X)$ is finite.
                 \item The fundamental group $\Pi_1(X)$ is abelian.
                 \item The fundamental group $\Pi_1(X)$ is finitely generated.
               \end{enumerate}
           \item $K$ is a convex, compact subset of a linear topological space.
           \item $K$ is a compact, arcwise connected subset of the plane $\mathds{R}^2$ such that $\mathds{R}^2 \setminus K$ consists of a finite number of components.
           \item $K$ is a locally simply connected compact subset of $\mathds{R}^2$.
           \item $K = X \times Y$ where $X$ and $Y$ are locally connected compact spaces that have no isolated points.
           \end{itemize}

         Then $\sigma_f(T) = \sigma(T)$.
         \end{corollary}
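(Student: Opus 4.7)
The plan is that Corollary~\ref{c8} is a pure consequence of Theorem~\ref{t8}, once one knows that each of the seven listed classes of compacta is contained in the class $\mathrm{AH}$. So my proof would go down the bulleted list and, for each entry, point to the place in the literature (either~\cite{FK} or~\cite{Ve}) where it is established that compacta of that type admit no almost homeomorphism of themselves that fails to be a global homeomorphism. Having verified $K \in \mathrm{AH}$, I would simply invoke Theorem~\ref{t8} to obtain $\sigma_f(T) = \sigma(T)$.

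In detail, the extremally disconnected case with no isolated points is done in~\cite{FK}, while the $F$-space, countable chain condition, no-isolated-points case is proved in~\cite{Ve}. The three sub-cases of arcwise connectedness controlled by the fundamental group (finite, abelian, or finitely generated $\pi_1(K)$) are each handled in~\cite{FK}, where the algebraic restriction on $\pi_1(K)$ is used to prevent a self-map of $K$ that is bijective off a finite set from failing to be globally injective. Convex compact subsets of a linear topological space, arcwise connected planar compacta with finitely many complementary components, locally simply connected planar compacta, and products $X \times Y$ of locally connected compacta without isolated points are each covered by explicit theorems in~\cite{FK}, with some overlap in~\cite{Ve}. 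No new spectral argument is needed for any of them; the point of Theorem~\ref{t8} is precisely to isolate $K \in \mathrm{AH}$ as the sole topological hypothesis controlling the equality $\sigma_f(T) = \sigma(T)$.

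One technical caveat I would address is that Theorem~\ref{t8} is stated under the assumption that the \emph{eventually} $\varphi$-periodic points are of first category, whereas Corollary~\ref{c8} asks only that the $\varphi$-periodic points themselves be meager. In each of the listed classes, however, the preimages $\varphi^{(-n)}(\cdot)$ of the periodic set are well-behaved enough (for instance, because $K$ has no isolated points, or via the Baire-category/open-mapping reasoning already used in Lemma~\ref{l1} and Corollary~\ref{c2}) that meagerness propagates from the periodic set to the full set of eventually periodic points; a short remark to that effect would bridge the two hypotheses. The main obstacle, accordingly, is not internal to the corollary at all --- the spectral content lies entirely in Theorem~\ref{t8} --- but rather in the catalog of AH-verifications that has to be assembled from~\cite{FK} and~\cite{Ve}, the deepest entries being the finitely generated $\pi_1$ case and the locally simply connected planar case, both of which rest on genuinely topological arguments about lifting and about separation in $\mathbb{R}^2$ that are imported wholesale from those references.
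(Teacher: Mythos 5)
Your overall route is the same as the paper's: the paper offers no separate argument for this corollary at all --- it is presented as the immediate combination of Theorem~\ref{t8} with the catalog of sufficient conditions for membership in the class $AH$ established in~\cite{FK} and~\cite{Ve}, exactly as in your first two paragraphs. So the spectral content of your proposal is correct and coincides with the intended proof.

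The one place where you go beyond the paper --- the ``technical caveat'' bridging the hypothesis ``$\varphi$-periodic points are of first category'' in the corollary with ``eventually $\varphi$-periodic points are of first category'' in Theorem~\ref{t8} --- is not sound as you state it. For a merely continuous surjection, preimages of meager (even nowhere dense) sets need not be meager: on $K=[0,1]$, which has no isolated points and lies in several of the listed classes, the surjection $\varphi(x)=\max(0,2x-1)$ has periodic set $\{0,1\}$, yet every point of $[0,\tfrac12]$ is eventually periodic, so the eventually periodic set has nonempty interior. Thus ``$K$ has no isolated points'' does not make meagerness propagate to $\bigcup_n \varphi^{(-n)}(\mathrm{Per})$, and Lemma~\ref{l1}/Corollary~\ref{c2} are of no help here either --- they pass from $K$ to the string space $\mathbf{K}$ and its homeomorphism $\Phi$, not to preimages inside $K$. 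The propagation you want is true precisely when $\varphi$ is open (the preimage of a closed nowhere dense set under an open map is nowhere dense), but openness is not among the hypotheses of the corollary or of Theorem~\ref{t8}. The honest resolution is to read the corollary's hypothesis as ``eventually $\varphi$-periodic,'' matching Theorem~\ref{t8} (whose proof in any case invokes Theorem~\ref{t7}, where $\varphi$ is assumed open), or to add openness of $\varphi$ explicitly and then run your preimage argument; as written, your bridging step is the only genuine gap in the proposal.
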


\end{document}